\newtheorem{theorem}{Theorem}[section]
\newtheorem{proposition}[theorem]{Proposition}
\newtheorem{lemma}[theorem]{Lemma}
\newtheorem{example}[theorem]{Example}
\newcommand{\mc}{\mathcal}
\newcommand{\mb}{\mathbb}
\renewcommand{\dim}{\operatorname{dim}}
\newcommand{\dpair}[2]{\left\langle#1,#2\right\rangle}
\newcommand{\rank}[1]{\operatorname{rank}\left(#1\right)}
\newcommand{\clos}[2]{\overline{#2}^{#1}}
\newcommand{\linspan}[1]{\operatorname{span}\left\{#1\right\}}
\newcommand{\bs}{\boldsymbol}
\renewcommand{\r}{\bs r}
\newcommand{\T}{\mc T_{\r}}
\newcommand{\id}{\operatorname{id}}
\newcommand{\R}{\mathbb R}
\newcommand{\N}{\mathbb N}
\newcommand{\Z}{\mathbb Z}
\newcommand{\Umin}{U^{\min}}
\newcommand{\mix}{\text{mix}}
\newcommand{\HS}{\text{HS}}
\newcommand{\cref}{\ref}
\newcommand{\Cref}{\ref}
\begin{document}

\title{Singular Value Decomposition
       in Sobolev Spaces: Part II}

\author{Mazen Ali and Anthony Nouy}

\address{M. Ali: Ulm University, Institute for Numerical Mathematics}
\email{mazen.ali@uni-ulm.de}
\address{A. Nouy: Centrale Nantes, LMJL (UMR CNRS 6629)}
\email{anthony.nouy@ec-nantes.fr}

\begin{abstract}
Under certain conditions, an element of a tensor product space
can be identified with a compact operator and the singular value
decomposition (SVD) applies to the latter.
These conditions are not fulfilled in
Sobolev spaces.
In the previous part of this work (part I),
we introduced some preliminary notions in the theory of tensor product spaces.
We analyzed low-rank approximations in $H^1$ and the error of the
SVD performed in the ambient $L^2$ space.

In this work (part II), we continue
by considering variants of the SVD in norms stronger than the $L^2$-norm.
Overall and, perhaps surprisingly, this leads to a more difficult
control of the $H^1$-error. We briefly consider an isometric embedding
of $H^1$ that allows direct application of the SVD to $H^1$-functions.
Finally,
we provide a few numerical examples
that support our theoretical findings.
\end{abstract}

\keywords{
    Singular Value Decomposition (SVD),
    Higher-Order Singular Value Decomposition (HOSVD),
    Low-Rank Approximation,
    Tensor Intersection Spaces,
    Sobolev Spaces,
    Minimal Subspaces
}

\subjclass[2010]{46N40 (primary), 65J99 (secondary)}

\maketitle

\section{Introduction}
A function $u$ in the tensor product $H=H_1\otimes H_2$ of two Hilbert spaces
$H_1$ and $H_2$ may be identified with a compact operator
$T_u:H_2\rightarrow H_1$. This identification is possible when the norm
on $H$ is not weaker than the injective norm, i.e., in a certain sense
the norm on
$H$ is compatible with the norms on $H_1$ and $H_2$. In such a case
we can decompose $u$ as
\begin{align}\label{eq:introsvd}
    u=\sum_{k=1}^\infty\sigma_k\psi_k\otimes\phi_k,
\end{align}
for a non-negative non-increasing sequence $\{\sigma_k\}_{k\in\N}$ and
orthonormal
systems
$\{\psi_k\}_{k\in\N}\subset H_1$ and
$\{\phi_k\}_{k\in\N}\subset H_2$. This is the well known
\emph{singular value decomposition (SVD)} and it provides low-rank
approximations via
\begin{align*}
   \inf_{\rank{v_r}\leq r} \left\|u-u_r\right\|^2_H=
   \left\|u-\sum_{k=1}^r\sigma_k\psi_k\otimes\phi_k\right\|^2_H=
   \sum_{k=r+1}^\infty\sigma_k^2,
\end{align*}
if, e.g., $\|\cdot\|_{H}$ is the canonical norm on a Hilbert tensor space.

If $H$ is the Sobolev space of once weakly differentiable functions,
the above assumption is not satisfied and there is no SVD for a function
$u\in H^1$ in general. The focus of this work is to explore variants of the
SVD in different ambient spaces in the $H^1$-norm. In part I, we showed
that low-rank approximations in the Tucker format in $H^1$ exist. More precisely,
\begin{theorem}
    $\T\left({}_a\bigotimes_{j=1}^d H^1(\Omega_j)\right)$ is
    weakly closed and therefore proximinal in $H^1(\Omega)$.
\end{theorem}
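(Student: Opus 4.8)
The plan is to establish weak sequential closedness of $\T$ directly, and then to deduce proximinality from the reflexivity of $H^1(\Omega)$ by a routine minimizing-sequence argument. The entire argument hinges on the characterization of $\T$ through minimal subspaces developed in Part~I: a function $u$ belongs to $\T$ exactly when $\dim\Umin_j(u)\le r_j$ for each mode $j$, where $\Umin_j(u)\subseteq H^1(\Omega_j)$ is spanned by the mode-$j$ contractions of $u$. The key preliminary I would record is that, for every $j$ and every functional $\varphi$ acting on the complementary variables, the contraction $u\mapsto T^{(j)}_u(\varphi)\in H^1(\Omega_j)$ is a bounded linear map on $H^1(\Omega)$, and hence weak-to-weak continuous. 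In effect, the quantity $\dim\Umin_j$ should be weakly lower semicontinuous, which is precisely what closedness of a rank-bounded set requires.

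For closedness, I would take $u_n\in\T$ with $u_n\rightharpoonup u$ in $H^1(\Omega)$, fix a mode $j$, and argue by contradiction. If $\dim\Umin_j(u)\ge r_j+1$, there are functionals $\varphi_0,\dots,\varphi_{r_j}$ for which the contractions $g_i:=T^{(j)}_u(\varphi_i)$ are linearly independent in $H^1(\Omega_j)$. For each $n$ the contractions $g^{(n)}_i:=T^{(j)}_{u_n}(\varphi_i)$ lie in $\Umin_j(u_n)$, a space of dimension at most $r_j$, so they obey a nontrivial relation $\sum_i c^{(n)}_i g^{(n)}_i=0$ with $\sum_i|c^{(n)}_i|^2=1$. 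Compactness of the unit sphere in $\R^{r_j+1}$ furnishes a subsequence with $c^{(n)}\to c\neq 0$; testing the relation against an arbitrary functional on $H^1(\Omega_j)$ and using $g^{(n)}_i\rightharpoonup g_i$ together with the scalar convergence $c^{(n)}_i\to c_i$ yields $\sum_i c_i g_i=0$, contradicting independence. Thus $\dim\Umin_j(u)\le r_j$ for every $j$, i.e.\ $u\in\T$.

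Proximinality then follows in the standard manner. Given $w\in H^1(\Omega)$, any minimizing sequence for $\inf_{v\in\T}\|w-v\|_{H^1}$ is bounded, so reflexivity provides a weakly convergent subsequence with some limit $v^\star$; weak closedness places $v^\star\in\T$, and weak lower semicontinuity of the norm shows that $v^\star$ attains the infimum.

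The main obstacle I anticipate lies not in the soft limiting argument but in the $H^1$-specific setup that precedes it. One must verify that the mode-$j$ contractions genuinely take values in $H^1(\Omega_j)$ and depend boundedly on $u\in H^1(\Omega)$. This is delicate because $H^1(\Omega)$ is an intersection (mixed-regularity) space rather than the canonical Hilbert tensor product of the factors $H^1(\Omega_j)$, so the contraction must simultaneously control the $L^2$-part and the derivative-part of the $H^1$-norm. I would therefore rely on the minimal-subspace machinery of Part~I to guarantee that $T^{(j)}_u$ is well defined and continuous on $H^1(\Omega)$; once this is in place, the weak-to-weak continuity and the finite-dimensional compactness argument above close the proof.
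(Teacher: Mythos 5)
Your proposal is correct and takes essentially the same route as the paper's (the proof itself lives in Part~I; this paper only recalls the statement): one characterizes $\T$ by the condition $\dim\Umin_j(u)\le r_j$ for all $j$, shows that the mode-$j$ contractions $u\mapsto T^{(j)}_u(\varphi)\in H^1(\Omega_j)$ are bounded on $H^1(\Omega)$ for functionals $\varphi$ on $L^2$ of the complementary variables (this works because $H^1(\Omega)$ embeds continuously into each $H^{(e_j)}$, which \emph{is} a canonical Hilbert tensor product, so the contraction controls both the $L^2$- and derivative-parts of the $H^1(\Omega_j)$-norm), and deduces weak lower semicontinuity of $\dim\Umin_j$ by exactly your normalized-coefficient compactness argument, in the style of Falc\'o--Hackbusch, with proximinality then following from reflexivity via the standard minimizing-sequence argument. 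The only ingredient you leave implicit --- that finite minimal-subspace dimensions in every mode force the weak limit $u$ to lie in the \emph{algebraic} tensor product $\bigotimes_{j}\Umin_j(u)$ with factors in $H^1(\Omega_j)$ --- is indeed the Part~I machinery you cite (unconditional at finite rank, via the $L^2$ theory plus $H^1$-regularity of the contractions), so there is no substantive gap.
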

We also showed under which conditions
\begin{align*}
        u\in{}_{\|\cdot\|_1}\bigotimes_{j=1}^d\Umin_j(u),
\end{align*}
i.e., $u$ belongs to the tensor product of its minimal subspaces.
Finally, we analyzed the $H^1$-error of the $L^2$-SVD for a general order
$d\geq 2$.

In this part, we consider the intersection space structure of $H^1$
\begin{align*}
    H^1(\Omega_1\times\Omega_2)=\bigg(H^1(\Omega_1)\otimes L^2(\Omega_2)\bigg)\cap
    \bigg(L^2(\Omega_1)\otimes H^1(\Omega_2)\bigg)=:H^{(1,0)}\cap H^{(0,1)}.
\end{align*}
We analyze the $H^1$-error of the $H^{(1,0)}$-
and $H^{(0,1)}$-SVDs.
We also consider an isometric embedding of
$H^1$ into a space which allows the direct application of the SVD.

The paper is organized as follows.
In Section \cref{sec:approx},
we consider the $H^{(1,0)}$-
and $H^{(0,1)}$-SVDs
and generalizations to higher dimensions.
In Section \cref{sec:alt}, we consider the SVD in higher-dimensional
spaces of mixed smoothness, exponential sum approximations and
an isometric embedding of $H^1$ that allows a direct application
of the SVD.
We conclude in Section \cref{sec:numexp}
with some simple numerical
experiments with different types of low-rank approximations.
Section \cref{sec:concl} summarizes the results of part I and part II.

\section{SVD in $H^{(1,0)}$ and $H^{(0, 1)}$}\label{sec:approx}
Before we proceed with analyzing SVDs in $H^{(1,0)}$ and
$H^{(0, 1)}$,
we consider the corresponding singular values and compare them to
$L^2$-singular values.

\subsection{$H^{(1,0)}$ and $H^{(0, 1)}$ Singular Values}
We consider a function $u\in H^1(\Omega)$ as an element of the intersection
space
$
    u\in H^{(1,0)}\cap H^{(0,1)}.
$
We first consider $u$ as a Hilbert Schmidt operator
$u:L^2(\Omega_2)\rightarrow H^1(\Omega_1)$ defined by
\begin{align*}
    u[w]=\int_{\Omega_2}u(\cdot, y)w(y)dy,\quad w\in L^2(\Omega_2).
\end{align*}
The difference to simply viewing $u$ as an $L^2$ integral kernel
arises
when we consider the adjoint $u^*:H^1(\Omega_1)\rightarrow L^2(\Omega_2)$
\begin{align*}
    u^*[v] = \int_{\Omega_1}u(x, \cdot)v(x)+
    \frac{\partial}{\partial x}u(x,\cdot)\frac{d}{dx}v(x)dx,\quad
    v\in H^1(\Omega_1).
\end{align*}
The corresponding left and right singular functions $\psi_k^1\in H^1(\Omega_1)$
and $\phi_k^0\in L^2(\Omega_2)$ are respectively given by
\begin{align*}
    uu^*[\psi_k^1]=\int_{\Omega_2}u(\cdot, y)
    \int_{\Omega_1}u(x, y)\psi_k^1(x)+\frac{\partial}{\partial x}u(x, y)
    \frac{d}{dx}\psi_k^1(x)dxdy = \lambda^{10}_k\psi_k^1,
\end{align*}
and
\begin{align}\label{eq:singfuncs}
    u^*u[\phi_k^0]&=\int_{\Omega_1}u(x,\cdot)
    \int_{\Omega_2}u(x, y)\phi_k^0(y) dydx\notag\\
    &+
    \int_{\Omega_1}\frac{\partial}{\partial x}u(x, \cdot)
    \int_{\Omega_2}\frac{\partial}{\partial x}u(x, y)
    \phi_k^0(y)dydx =\lambda^{10}_k\phi_k^0,
\end{align}
with the corresponding singular values $\sigma_k^{10}=\sqrt{\lambda_k^{10}}$
sorted in decreasing order.
Note that, unlike in the previous subsection, in general
$\phi_k^0\not\in H^1(\Omega_2)$. To guarantee this we would have to require
$u\in H^1_{\text{mix}}(\Omega)$. This means that the sum
$
    u_r = \sum_{k=1}^r\sigma^{10}_k\psi_k^1\otimes\phi_k^0
$
does not make sense in $H^1(\Omega)$ in general, only in
$H^{(1,0)}$.

Similarly, we can interpret $u\in H^1(\Omega)$ as a Hilbert Schmidt operator\\
$u:H^1(\Omega_2)\rightarrow L^2(\Omega_1)$ defined by
\begin{align*}
    u[w]=\int_{\Omega_2}u(\cdot, y)w(y)
    +
    \frac{\partial}{\partial y}u(\cdot, y)\frac{d}{dy}w(y)dy
    ,\quad w\in H^1(\Omega_2),
\end{align*}
with an adjoint $u^*:L^2(\Omega_1)\rightarrow H^1(\Omega_2)$ given by
$
    u^*[v]=\int_{\Omega_1}u(x, \cdot)v(x)dx,\quad v\in L^2(\Omega_1).
$
The corresponding singular functions $\psi_k^0\in L^2(\Omega_1)$ and
$\phi_k^1\in H^1(\Omega_2)$ satisfy
\begin{align*}
    uu^*[\psi_k^0]&=\int_{\Omega_2}u(\cdot, y)
    \int_{\Omega_1}u(x, y)\psi_k^0(x) dxdy\\
    &+
    \frac{\partial}{\partial y}u(\cdot, y)
    \int_{\Omega_1}\frac{\partial}{\partial y}u(x, y)
    \psi_k^0(x)dxdy =\lambda^{01}_k\psi_k^0,
\end{align*}
and
\begin{align}
    u^*u[\phi_k^1]=
    \int_{\Omega_1}u(x, \cdot)
    \int_{\Omega_2}u(x, y)\phi_k^1(y)+\frac{\partial}{\partial y}u(x, y)
    \frac{d}{dy}\phi_k^1(y)dydx = \lambda^{01}_k\phi_k^1,
\end{align}
where $\sigma_k^{01}=\sqrt{\lambda_k^{01}}$ are the corresponding singular
values, sorted in decreasing order.
We make the following immediate observation.
\begin{proposition}\label{prop:relationsingval}
    Let $u\in H^1(\Omega)$ and let
    $\sum_{k=1}^\infty\sigma_k^{00}\psi_k\otimes\phi_k$,
    $\sum_{k=1}^\infty\sigma_k^{10}\psi^1_k\otimes\phi^0_k$ and
    $\sum_{k=1}^\infty\sigma_k^{01}\psi^0_k\otimes\phi^1_k$ be the SVD of
    $u$ interpreted as an element of $H^{(0,0)}$, $H^{(1,0)}$ and
    $H^{(0,1)}$ respectively.
    Then, we have for all $r\geq 0$
    \begin{align*}
        \sum_{k=r+1}^\infty(\sigma_k^{00})^2&\leq
        \sum_{k=r+1}^\infty(\sigma_k^{10})^2\leq
        \sum_{k=r+1}^\infty(\sigma_k^{00})^2\|\psi_k\|_1^2,\\
        \sum_{k=r+1}^\infty(\sigma_k^{00})^2&\leq
        \sum_{k=r+1}^\infty(\sigma_k^{01})^2\leq
        \sum_{k=r+1}^\infty(\sigma_k^{00})^2\|\phi_k\|_1^2.
    \end{align*}
\end{proposition}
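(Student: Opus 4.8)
The plan is to read all three tail sums as best rank-$r$ approximation errors and then compare them through the ordering of the three norms. Since $H^{(0,0)}=L^2(\Omega)$, while $H^{(1,0)}$ and $H^{(0,1)}$ are Hilbert tensor spaces equipped with the canonical cross norm, each admits an SVD whose truncation is the best low-rank approximation in the respective norm, i.e.
\[
   \sum_{k=r+1}^\infty(\sigma_k^{00})^2=\inf_{\rank{v}\le r}\|u-v\|_{H^{(0,0)}}^2,\qquad
   \sum_{k=r+1}^\infty(\sigma_k^{10})^2=\inf_{\rank{v}\le r}\|u-v\|_{H^{(1,0)}}^2,
\]
and analogously for $H^{(0,1)}$. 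The two norms satisfy $\|\cdot\|_{H^{(0,0)}}\le\|\cdot\|_{H^{(1,0)}}$, because $\|f\|_{H^{(1,0)}}^2=\|f\|_{L^2}^2+\|\partial_x f\|_{L^2}^2$. I will prove only the first row; the second follows verbatim after exchanging the roles of $\Omega_1$ and $\Omega_2$ (and using $\|\cdot\|_{H^{(0,0)}}\le\|\cdot\|_{H^{(0,1)}}$).

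For the lower bound, let $u_r^{10}=\sum_{k=1}^r\sigma_k^{10}\psi_k^1\otimes\phi_k^0$ be the rank-$r$ truncation of the $H^{(1,0)}$-SVD. It is a competitor of rank at most $r$ in the $L^2$ minimization, and the $L^2$-norm is dominated by the $H^{(1,0)}$-norm, so
\[
   \sum_{k=r+1}^\infty(\sigma_k^{00})^2
   \le\|u-u_r^{10}\|_{H^{(0,0)}}^2
   \le\|u-u_r^{10}\|_{H^{(1,0)}}^2
   =\sum_{k=r+1}^\infty(\sigma_k^{10})^2 .
\]
For the upper bound I instead feed the truncated $L^2$-SVD $u_r^{00}=\sum_{k=1}^r\sigma_k^{00}\psi_k\otimes\phi_k$ into the $H^{(1,0)}$ minimization, giving $\sum_{k>r}(\sigma_k^{10})^2\le\|u-u_r^{00}\|_{H^{(1,0)}}^2$. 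Since the tail is $u-u_r^{00}=\sum_{k>r}\sigma_k^{00}\psi_k\otimes\phi_k$ and the $\phi_k$ are orthonormal in $L^2(\Omega_2)$, all cross terms vanish and the two parts of the $H^{(1,0)}$-norm decouple, yielding
\[
   \|u-u_r^{00}\|_{H^{(1,0)}}^2
   =\sum_{k=r+1}^\infty(\sigma_k^{00})^2\big(\|\psi_k\|_{L^2}^2+\|\partial_x\psi_k\|_{L^2}^2\big)
   =\sum_{k=r+1}^\infty(\sigma_k^{00})^2\|\psi_k\|_1^2 ,
\]
using $\|\psi_k\|_{L^2}=1$. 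This is exactly the claimed right inequality.

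The only point that needs genuine justification—and the one that would fail without the standing hypothesis $u\in H^1(\Omega)$—is that this term-by-term manipulation is legitimate, i.e. that $\psi_k\in H^1(\Omega_1)$ and that $\partial_x$ acts coefficientwise on the expansion. Writing the coefficient of $\phi_k$ in $u(x,\cdot)$ as $\sigma_k^{00}\psi_k(x)$ and noting that weak differentiation in $x$ commutes with the $L^2(\Omega_2)$-pairing against the fixed $\phi_k$, Parseval gives $\|\partial_x u\|_{L^2}^2=\sum_k(\sigma_k^{00})^2\|\partial_x\psi_k\|_{L^2}^2<\infty$; hence each $\psi_k$ with $\sigma_k^{00}>0$ lies in $H^1(\Omega_1)$ and the full $L^2$-SVD of $u$ already converges in $H^{(1,0)}$, so every tail above is finite. (If some $\|\psi_k\|_1=\infty$ the upper bound is vacuous, so nothing is lost.) This regularity verification, rather than the inequalities themselves, is the substantive step, which is why the proposition is a short but not entirely formal consequence of the SVD optimality property.
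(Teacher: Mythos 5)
Your proof is correct and takes essentially the same route as the paper's: the lower bound via SVD-truncation optimality and the norm domination $\|\cdot\|_{(0,0)}\leq\|\cdot\|_{(1,0)}$, and the upper bound by inserting the truncated $L^2$-SVD as a competitor in the $H^{(1,0)}$ best rank-$r$ problem and evaluating its $\|\cdot\|_{(1,0)}$-error termwise using $L^2(\Omega_2)$-orthonormality of the $\phi_k$. The only difference is that you explicitly verify the substantive implicit step—that $\psi_k\in H^1(\Omega_1)$ and that weak $x$-differentiation commutes with the pairing against $\phi_k$, so the tail identity $\left\|u-\sum_{k=1}^r\sigma_k^{00}\psi_k\otimes\phi_k\right\|_{(1,0)}^2=\sum_{k=r+1}^\infty(\sigma_k^{00})^2\|\psi_k\|_1^2$ is legitimate—which the paper's proof asserts without comment.
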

\begin{proof}
    The first statement is given by
    \begin{align*}
        \sum_{k=r+1}^\infty(\sigma^{10}_k)^2&=
        \inf_{v\in\mc R_{r}(H^1(\Omega_1){}_a\otimes L^2(\Omega_2))}
        \|u-v\|_{(1,0)}^2\\
        &\geq
        \inf_{v\in\mc R_{r}(L^2(\Omega_1){}_a\otimes L^2(\Omega_2))}
        \|u-v\|_{(0,0)}^2=
        \sum_{k=r+1}^\infty(\sigma^{00}_k)^2,
    \end{align*}
    and
    \begin{align*}
        \sum_{k=r+1}^\infty(\sigma^{10}_k)^2&=
        \inf_{v\in\mc R_{r}(H^1(\Omega_1){}_a\otimes L^2(\Omega_2))}
        \|u-v\|^2_{(1,0)}\leq
         \Big\|u-\sum_{k=1}^{r}\sigma_k^{00}\psi_k\otimes\phi_k
        \Big\|_{(1,0)}^2 \\
        &=
        \sum_{k=r+1}^\infty(\sigma_k^{00})^2\|\psi_k\|_1^2.
    \end{align*}
    Analogously for the second statement.
\end{proof}
Note that the upper bounds in Proposition
\cref{prop:relationsingval} do not necessarily
hold component-wise, i.e., the inequalities
$
    \sigma_k^{10}\leq\sigma_k^{00}\|\psi_k\|_1,
$
do not hold in general. This is due to the fact that when estimating the
injective norm
\[
    \left\|\sum_{k=r+1}^{\infty}\sigma_k^{00}\psi_k\otimes\phi_k
    \right\|_{\vee\left(H^1(\Omega_1), L^2(\Omega_2)\right)},
\]
the functions $\psi_k$ are not orthonormal in $H^1(\Omega_1)$ and
the sequence
$\{\sigma_k^{00}\|\psi_k\|_1\}_{k\in\N}$ is not necessarily
decreasing.

Naturally, we can ask whether we can derive a bound of the sort
\[
    \sum_{k=r+1}^\infty(\sigma_k^{00})^2\|\psi_k\|_1^2\lesssim
    \sum_{k=r+1}^\infty(\sigma_k^{10})^2\gamma(k),
\]
for some sequence $\gamma(k)$.
Though we do not believe this is possible
without further assumptions, we can nonetheless improve
the bounds. This indicates that indeed the quantities
$\sigma_k^{10}$ and $\sigma_k^{00}\|\psi_k\|_1$
are closely related. This
will later be confirmed by numerical observations.

\begin{theorem}\label{thm:singvalsbounds}
    Let $u\in H^1(\Omega)$ and assume the $L^2$-SVD
    $u=\sum_{k=1}^\infty\sigma_k^{00}\psi_k\otimes\phi_k$ converges
    in $H^1(\Omega)$. Then, we have
    \begin{align*}
        \sigma_r^{10}
        &=\|\psi_r^1\|_0^{-1/2}\left(
        \sum_{k=1}^\infty(\sigma_k^{00})^4|\dpair{\psi_r^1}{\psi_k^0}_1|^2
        \right)^{1/4}
        \geq\sigma_r^{00}\left(
        \frac{\dpair{\psi_r^1}{\psi_r}_1}{\|\psi_r^1\|_0}\right)^{1/2},\\
        \sigma_r^{01}
        &=\|\phi_r^1\|_0^{-1/2}\left(
        \sum_{k=1}^\infty(\sigma_k^{00})^4|\dpair{\phi_r^1}{\phi_k^0}_1|^2
        \right)^{1/4}
        \geq\sigma_r^{00}\left(
        \frac{\dpair{\phi_r^1}{\phi_r}_1}{\|\phi_r^1\|_0}\right)^{1/2}.
    \end{align*}
\end{theorem}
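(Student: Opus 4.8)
The plan is to compute the Gram operator $uu^*$ of $u$, viewed as a Hilbert Schmidt operator $u\colon L^2(\Omega_2)\to H^1(\Omega_1)$, explicitly in terms of the $L^2$-SVD, to extract from its eigenrelation a pointwise identity linking the $H^{(1,0)}$- and $L^2$-singular objects, and finally to read off the claimed formula by a Parseval argument in $L^2(\Omega_1)$. Throughout, $\psi_k^0=\psi_k$ denotes the $L^2$-left singular functions, which lie in $H^1(\Omega_1)$ by the convergence hypothesis.

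First I would substitute $u=\sum_k\sigma_k^{00}\psi_k\otimes\phi_k$ into $uu^*$. Since $u^*[v]$ is the function $y\mapsto\dpair{u(\cdot,y)}{v}_1$, the $L^2(\Omega_2)$-orthonormality of $\{\phi_k\}$ makes every cross term vanish and the operator collapses to
\begin{align*}
    uu^*[v]=\sum_{k=1}^\infty(\sigma_k^{00})^2\dpair{\psi_k}{v}_1\,\psi_k,\qquad v\in H^1(\Omega_1).
\end{align*}
The interchange of the summation with the $H^1$-inner product is precisely what the assumed $H^1$-convergence of the $L^2$-SVD (equivalently, convergence in the Hilbert Schmidt norm of $\HS(L^2(\Omega_2),H^1(\Omega_1))$) legitimizes.

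Next I would insert the eigenfunction $\psi_r^1$, so that $\sum_k(\sigma_k^{00})^2\dpair{\psi_k}{\psi_r^1}_1\psi_k=(\sigma_r^{10})^2\psi_r^1$, and pair both sides with $\psi_j$ in $L^2(\Omega_1)$. The orthonormality $\dpair{\psi_k}{\psi_j}_0=\delta_{kj}$ annihilates every term but one on the left, producing the key relation
\begin{align*}
    (\sigma_j^{00})^2\dpair{\psi_j}{\psi_r^1}_1=(\sigma_r^{10})^2\dpair{\psi_r^1}{\psi_j}_0,\qquad j\in\N.
\end{align*}
This single identity is what upgrades the elementary power-two relation $(\sigma_r^{10})^2=\sum_k(\sigma_k^{00})^2|\dpair{\psi_k}{\psi_r^1}_1|^2$ (obtained directly from $\|u^*[\psi_r^1]\|_0=\sigma_r^{10}$) to the power-four form in the statement. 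Expanding $\psi_r^1$ in the $L^2(\Omega_1)$-orthonormal system $\{\psi_k\}$ and applying Parseval, $\|\psi_r^1\|_0^2=\sum_j|\dpair{\psi_r^1}{\psi_j}_0|^2$, then substituting the key relation coefficient-wise yields
\begin{align*}
    (\sigma_r^{10})^4\|\psi_r^1\|_0^2=\sum_{j=1}^\infty(\sigma_j^{00})^4|\dpair{\psi_r^1}{\psi_j}_1|^2,
\end{align*}
which is the asserted equality after taking fourth roots. Retaining only the $j=r$ summand on the right, and fixing the sign of $\psi_r^1$ so that $\dpair{\psi_r^1}{\psi_r}_1\geq 0$, gives the lower bound; the companion statement for $\sigma_r^{01}$ follows verbatim upon exchanging the two tensor factors.

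The main obstacle is not the algebra but the functional-analytic justification of the Parseval step: one must know that $\psi_r^1$ genuinely lies in the $L^2(\Omega_1)$-closed span of $\{\psi_k\}$, that is, in the minimal subspace $\Umin_1(u)$, so that the expansion is exact rather than merely a best approximation. This I would obtain by noting that $\psi_r^1$, being an eigenfunction of $uu^*$ for the positive eigenvalue $(\sigma_r^{10})^2$, lies in the range of $u$, hence in $\Umin_1(u)$, the $L^2(\Omega_1)$-closure of $\linspan{\psi_k}$; together with the hypothesis that $\psi_k\in H^1(\Omega_1)$ and that the series converges in $H^1$, this removes the only gap and all interchanges above become rigorous.
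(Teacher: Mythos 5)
Your proof is correct and follows essentially the same route as the paper: compute $uu^*$ from the $L^2$-SVD, substitute the eigenfunction $\psi_r^1$, and exploit the $L^2(\Omega_1)$-orthonormality of $\{\psi_k\}$ to extract $(\sigma_r^{10})^4\|\psi_r^1\|_0^2=\sum_k(\sigma_k^{00})^4|\dpair{\psi_r^1}{\psi_k}_1|^2$, keeping one term for the lower bound. The only difference is cosmetic: the paper simply takes the $L^2$-norm of both sides of the eigenrelation, so your coefficient-wise pairing and Parseval step --- and hence the membership argument $\psi_r^1\in\Umin_1(u)$, which in any case is immediate from the eigenrelation itself when $\sigma_r^{10}>0$ --- is a careful but unnecessary detour.
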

\begin{proof}
    We consider the $L^2$-SVD
    $u=\sum_{k=1}^\infty\sigma_k^{00}\psi_k\otimes\phi_k$.
    $u$ is identified with an operator
    $u:L^2(\Omega_2)\rightarrow H^1(\Omega_1)$.
    For any $w\in L^2(\Omega_2)$,\\
$
        u[w] = \sum_{k=1}^\infty\sigma_k^{00}
        \dpair{w}{\phi_k}_0\psi_k,
$
    converges in $H^1(\Omega_1)$ and for any $v\in H^1(\Omega_1)$,
$
        u^*[v]=\sum_{k=1}^\infty\sigma_k^{00}\dpair{v}{\psi_k}_1\phi_k,
$
    convergences in $L^2(\Omega_2)$.
    Thus,
    \begin{align*}
        uu^*[v]&=\sum_{k=1}^\infty\sigma_k^{00}
        \dpair{\sum_{l=1}^\infty\sigma_l^{00}\dpair{v}{\psi_l}_1\phi_l}{
        \phi_k}_0\psi_k\\
        &=
        \sum_{k=1}^\infty\sum_{l=1}^\infty\sigma_k^{00}\sigma_l^{00}
        \dpair{v}{\psi_l}_1\dpair{\phi_l}{\phi_k}_0\psi_k\\
        &=\sum_{k=1}^\infty (\sigma_k^{00})^2
        \dpair{v}{\psi_k}_1\psi_k.
    \end{align*}
    On the other hand, utilizing the $H^{(1,0)}$-SVD of $u$, we have\\
$
        uu^*[v]=\sum_{k=1}^\infty(\sigma_k^{10})^2\dpair{v}{\psi_k^1}\psi_k^1,
$
    and thus
    \begin{align*}
        uu^*[v]=
        \sum_{k=1}^\infty (\sigma_k^{00})^2
        \dpair{v}{\psi_k}_1\psi_k
        =
        \sum_{k=1}^\infty(\sigma_k^{10})^2\dpair{v}{\psi_k^1}\psi_k^1.
    \end{align*}
    Substituting $v=\psi_r^1$, we obtain
$
        \sum_{k=1}^\infty(\sigma_k^{00})^2
        \dpair{\psi_r^1}{\psi_k}_1\psi_k^0=(\sigma_r^{10})^2\psi_r^1,
$
    since $\{\psi_k^1\}_{k\in\N}$ are $H^1(\Omega_1)$-orthonormal.
    Finally, taking the $L^2(\Omega_1)$-norm of both sides and since
    $\{\psi_k\}_{k\in\N}$ are $L^2(\Omega_1)$-orthonormal, we obtain\\
$
        \sum_{k=1}^\infty(\sigma_k^{00})^4|\dpair{\psi_r^1}{\psi_k}|^2
        =(\sigma_r^{10})^4\|\psi_r^1\|_0^2.
$
    The statement for $\sigma_r^{01}$ follows analogously by
    identifying $u$ with an operator from $H^1(\Omega_2)$ to
    $L^2(\Omega_1)$. This completes the
    proof.
\end{proof}

The factors in the bounds in Theorem
\cref{thm:singvalsbounds} reflect how $\psi_r^1$,
normalized in $H^1(\Omega_1)$, scales w.r.t.\ $\psi_r$, normalized in
$L^2(\Omega_1)$. For instance, if $\{\psi_r\}_{r\in\N}$ behaves like Fourier
or wavelet basis,
then $\psi^1_r\sim\|\psi_r\|^{-1}_1\psi_r$. In this case, the right hand
side in Theorem \cref{thm:singvalsbounds} evaluates to
$
        \sigma_r^{00}\left(
        \frac{\dpair{\psi_r^1}{\psi_r}_1}{\|\psi_r^1\|_0}\right)^{1/2}
        \sim\sigma_r^{00}\|\psi_r\|_1.
$
This leads precisely to the upper bound of Proposition
\cref{prop:relationsingval}.
Analogous conclusions hold when considering $\sigma_r^{00}$, $\phi_r^1$ and
$\phi_r$.

Extending the results of this subsection
to $d>2$ using HOSVD
singular values and, e.g., the Tucker format is straightforward. Since we can
consider matricizations w.r.t.\ to each $1\leq j\leq d$ separately, the analysis
effectively reduces to the case $d=2$. Difficulties arise only when considering
simultaneous projections in different components of the tensor product
space. There we have to assume the rescaled singular values converge, as
was done in part I of this work.

\subsection{$H^{(1,0)}$ and $H^{(0, 1)}$ projections}\label{sec:h1l2proj}
Given the singular functions\\
$\{\psi_k^1\}_{k\in\N}$ and
$\{\phi^1_k\}_{k\in\N}$ associated with $H^{(1,0)}$ and
$H^{(0,1)}$ SVDs of $u$ respectively, we consider the finite
dimensional subspaces
\begin{align}\label{eq:Br}
    B^1_r&:=\linspan{\psi_k^1:1\leq k\leq r}\subset\Umin_1(u),\\
    B^2_r&:=\linspan{\phi_k^1:1\leq k\leq r}\subset\Umin_2(u),\notag
\end{align}
and the corresponding $H^1$-orthogonal projections
\begin{align}\label{eq:h1l2proj}
    P_r:H^1(\Omega_1)\rightarrow B^1_r,\quad
    Q_r:H^1(\Omega_2)\rightarrow B^2_r.
\end{align}
The tensor product $P_r\otimes Q_r$ is well defined on
$H^1(\Omega_1)\otimes_a H^1(\Omega_2)$, and on this space it holds
\begin{align}\label{eq:comm}
    P_r\otimes Q_r=(P_r\otimes\id_2)(\id_1\otimes Q_r)
    =(\id_1\otimes Q_r)(P_r\otimes\id_2).
\end{align}
However, the interpretation is problematic when considering
$P_r\otimes Q_r$ on the closure of
$H^1(\Omega_1)\otimes_a H^1(\Omega_2)$ . Take, e.g., the projection
$P_r\otimes\id_2$. This is an orthogonal projection on $H^{(1,0)}$
and we have
$
    (P_r\otimes\id_2)u=\sum_{k=1}^r\sigma_k^{10}\psi_k^1\otimes\phi_k^0.
$
But in general
$
    (P_r\otimes\id_2)u\not\in H^{(0,1)},
$
unless $u\in H^1(\Omega_1)\otimes_a H^1(\Omega_2)$. Thus, the subsequent
application $\id_1\otimes Q_r$ does not necessarily make sense and is
not continuous.

Notice the difference with the projections $P_{r}^1$ and $P_r^2$ for
$L^2$-SVD from part I (for $d=2$ and $r_1=r_2=r$). First, we had
$
    P_r^1\otimes\id_2 u=\id_1\otimes P_r^2 u=P^1_r\otimes P^2_r u,
$
since both the left and right projections already give the best
rank $r$ approximation in $L^2$.
Second, we required only $L^2$-orthogonality, thus preserving $H^1$-regularity in the image. Thus,
$P^1_r\otimes P^2_r$ made sense on $H^1(\Omega)$, although the sequence
of projections does not necessarily converge in $H^1(\Omega)$.
To that end, we had
to additionally assume in part I the
convergence of the rank-$r$ approximations $u_r$,
or convergence of the rescaled $L^2$-singular values.

In the present case, although we obtain optimality in the stronger
$\|\cdot\|_{(1,0)}$-norm, we lose convergence or possibly even boundedness in
the $\|\cdot\|_{(0,1)}$-norm. Thus, we can ask ourselves if $P_r$
is bounded from $L^2(\Omega_1)$ to $L^2(\Omega_1)$, i.e., if
$
    P_r\in\mc L\left(L^2(\Omega_1), L^2(\Omega_1)\right)$?

Specifically, what are the minimal assumptions - if any - that we require in
order to achieve this? The next example shows that indeed even for simple
projections this property is not guaranteed.

\begin{example}\label{ex:discproj}
    Let $\Omega_1=(0,1)$ and consider the space $H_0^1(0,1)$. We know
    $H_0^1(0,1)\hookrightarrow C(0,1)$. Consider $g\in H_0^1(0,1)
    \rightarrow\R$ defined by
$
        g[f]:=f(0.5),$ $\forall f\in H_0^1(0,1).
$
    Clearly, $g$ is a linear functional. Moreover, since any such $f$ is
    absolutely continuous, $g$ is bounded in the $\|\cdot\|_1$-norm.
    Thus, $g\in (H_0^1(0,1))^*$. By the Riesz representation theorem, there
    exists a unique $\tilde{g}\in H_0^1(0,1)$, such that
    $g[f]=\dpair{f}{\tilde{g}}_1$, for all $f\in H_0^1(0,1)$.

    Define the one dimensional subspace
    $U=\linspan{\tilde{g}}$.
    The corresponding $H^1$-orthogonal projection $P$ is given by
$
        Pv=\|\tilde{g}\|_1^{-2}\dpair{v}{\tilde g}_1\tilde{g}
       =\|\tilde{g}\|_1^{-2}v(0.5)\tilde{g}.
$
    Consider the sequence
    \begin{align*}
        v_n(x):=
        \begin{cases}
            1+(n+1)(x-0.5),&\;\text{if}\quad 0.5-\frac{1}{n+1}\leq x<0.5,\\
            1+(n+1)(0.5-x),&\;\text{if}\quad 0.5\leq x\leq0.5+\frac{1}{n+1},\\
            0,&\;\text{otherwise},
        \end{cases}
    \end{align*}
    $n\in\N$. Clearly, $v_n\in H_0^1(0, 1)$ for any $n\in\N$,
    $\|v_n\|_0\leq\sqrt{\frac{2}{n+1}}\longrightarrow 0$,
    and
    $Pv_n=\|\tilde{g}\|_1^{-2}\tilde{g}\neq 0$, for all $n\in\N$.
    Thus, $P$ can not be continuous in $L^2$.
\end{example}

A closer look at the preceding example shows that such a function
$\tilde{g}\in H_0^1(0, 1)$ differentiated twice yields the delta distribution.
Therefore, it can not be in $H^2(\Omega_1)$. On the other hand, if
the function has $H^2$-regularity, as the next statement shows, we can
indeed obtain boundedness in $L^2$.

\begin{lemma}\label{lemma:prbound}
    Let $u\in H^1(\Omega)$. In addition, assume the second
    unidirectional derivatives of $u$ exist in the distributional sense and are
    bounded, i.e.,
$
        \left\|\frac{\partial^2}{\partial x^2}u\right\|_0<\infty,$ $
        \left\|\frac{\partial^2}{\partial y^2}u\right\|_0<\infty.
$
    Finally, assume $u$ satisfies either zero Dirichlet or zero Neumann boundary
    conditions. Then, the projections defined in \eqref{eq:h1l2proj}
    can be bounded as
    \begin{align*}
        \|P_r v\|_1&\leq\sqrt{2}\|v\|_0\left(
        \sum_{k=1}^r\|\psi_k^1\|_0^2+\left\|
        \frac{d^2}{dx^2}\psi_k^1\right\|_0^2\right)^{1/2},\\
        \|Q_r w\|_1&\leq\sqrt{2}\|w\|_0\left(
        \sum_{k=1}^r\|\phi_k^1\|_0^2+\left\|
        \frac{d^2}{dy^2}\phi_k^1\right
        \|_0^2\right)^{1/2}.
    \end{align*}
\end{lemma}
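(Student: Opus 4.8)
The plan is to use the $H^1(\Omega_1)$-orthonormality of $\{\psi_k^1\}_{k=1}^r$ to collapse $\|P_r v\|_1$ into a sum of squared $H^1$-coefficients, and then to trade each $H^1$-coefficient for an $L^2$-coefficient by integration by parts. Since $P_r$ is the $H^1$-orthogonal projection onto $B_r^1$ and the $\psi_k^1$ are $H^1$-orthonormal, one has $P_r v = \sum_{k=1}^r \dpair{v}{\psi_k^1}_1\psi_k^1$ and therefore $\|P_r v\|_1^2 = \sum_{k=1}^r |\dpair{v}{\psi_k^1}_1|^2$. The entire task thus reduces to bounding a single coefficient $\dpair{v}{\psi_k^1}_1$ by $\|v\|_0$ instead of $\|v\|_1$.

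For the key step I would write $\dpair{v}{\psi_k^1}_1 = \int_{\Omega_1} v\,\psi_k^1 + \frac{dv}{dx}\frac{d\psi_k^1}{dx}\,dx$ and integrate the second term by parts to move the derivative off of $v$:
\[
  \int_{\Omega_1}\frac{dv}{dx}\frac{d\psi_k^1}{dx}\,dx = \Big[v\,\frac{d\psi_k^1}{dx}\Big]_{\partial\Omega_1} - \int_{\Omega_1} v\,\frac{d^2\psi_k^1}{dx^2}\,dx.
\]
Two things must be justified here. First, $\frac{d^2}{dx^2}\psi_k^1 \in L^2(\Omega_1)$: this I obtain from the eigenrelation $uu^*[\psi_k^1] = \lambda_k^{10}\psi_k^1$, since $\lambda_k^{10}\psi_k^1 = \int_{\Omega_2}u(\cdot, y)g_k(y)\,dy$ with $g_k(y)=\dpair{u(\cdot,y)}{\psi_k^1}_1 \in L^2(\Omega_2)$, so differentiating twice in $x$ and using $\|\frac{\partial^2}{\partial x^2}u\|_0 < \infty$ yields $\frac{d^2}{dx^2}\psi_k^1 \in L^2(\Omega_1)$; equivalently $\psi_k^1 \in \Umin_1(u) \subset H^2(\Omega_1)$ under this regularity. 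Second, the boundary bracket must vanish: in the Neumann case $\frac{\partial}{\partial x}u = 0$ on $\partial\Omega_1$ forces $\frac{d}{dx}\psi_k^1 = 0$ there and kills the bracket for every $v \in H^1(\Omega_1)$; in the Dirichlet case $u = 0$ on $\partial\Omega_1$ forces $\psi_k^1 = 0$ there, and restricting $v$ to $H_0^1(\Omega_1)$ (which is $L^2$-dense, so $L^2$-boundedness still follows) again kills the bracket. In both cases we arrive at the purely $L^2$ identity $\dpair{v}{\psi_k^1}_1 = \dpair{v}{\psi_k^1 - \frac{d^2}{dx^2}\psi_k^1}_0$.

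The remainder is routine. Cauchy--Schwarz in $L^2(\Omega_1)$ gives $|\dpair{v}{\psi_k^1}_1| \le \|v\|_0\,\|\psi_k^1 - \frac{d^2}{dx^2}\psi_k^1\|_0$, and the elementary inequality $\|a-b\|_0^2 \le 2(\|a\|_0^2 + \|b\|_0^2)$ produces the factor $\sqrt 2$ via $\|\psi_k^1 - \frac{d^2}{dx^2}\psi_k^1\|_0^2 \le 2(\|\psi_k^1\|_0^2 + \|\frac{d^2}{dx^2}\psi_k^1\|_0^2)$. Summing over $1 \le k \le r$ and taking square roots gives exactly the stated bound for $P_r$; the bound for $Q_r$ follows by the symmetric argument with $\Omega_1,\psi_k^1,x$ replaced by $\Omega_2,\phi_k^1,y$.

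I expect the genuine obstacle to be the regularity-and-boundary step rather than the final estimates. One must verify rigorously that the two global hypotheses on $u$ --- a bounded second unidirectional derivative and the homogeneous boundary conditions --- really descend to each singular function, so that simultaneously $\frac{d^2}{dx^2}\psi_k^1 \in L^2(\Omega_1)$ and $\psi_k^1$ (resp.\ $\frac{d}{dx}\psi_k^1$) vanishes on $\partial\Omega_1$. Passing from the eigenrelation to pointwise boundary values, and confirming that the Dirichlet restriction to $H_0^1(\Omega_1)$ is harmless for concluding $P_r \in \mc L(L^2(\Omega_1), L^2(\Omega_1))$, is the delicate part of the argument.
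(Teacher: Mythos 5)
Your proof is correct and follows essentially the same route as the paper's: expand $P_r v$ in the $H^1$-orthonormal singular functions so that $\|P_rv\|_1^2=\sum_{k=1}^r|\dpair{v}{\psi_k^1}_1|^2$, integrate by parts to replace $\dpair{v}{\psi_k^1}_1$ by $\dpair{v}{\psi_k^1-\frac{d^2}{dx^2}\psi_k^1}_0$, and finish with Cauchy--Schwarz and $(a+b)^2\le 2(a^2+b^2)$. The two steps you single out as delicate are precisely the ones the paper glosses over (``one can easily verify'' the twice weak differentiability, and ``the boundary term vanishes due to the boundary conditions''); your justification via differentiating the eigenrelation $uu^*[\psi_k^1]=\lambda_k^{10}\psi_k^1$ is the same device the paper itself uses in the appendix proof of Proposition \ref{prop:h1mixgen}, and your explicit restriction to $v\in H_0^1(\Omega_1)$ in the Dirichlet case (where $\psi_k^1=0$ on the boundary does not by itself kill the bracket $[v\,\frac{d}{dx}\psi_k^1]$) is a careful reading of what the paper leaves implicit.
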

\begin{proof}
    One can easily verify that $\psi_k^1$ and $\phi_k^1$ are
    twice weakly differentiable for any $k\in\N$.
    For any $v\in H^1(\Omega_1)$, we can write
$
        P_rv=\sum_{k=1}^r\dpair{v}{\psi_k^1}_1\psi_k^1.
$
    The coefficients can be written as
    \begin{align*}
        \dpair{v}{\psi_k^1}_1&=
        \int_{\Omega_1}v(x)\psi_k^1(x)dx+
        \int_{\Omega_1}\frac{d}{dx}v(x)\frac{d}{dx}\psi_k^1(x)dx\\
        &=
        \int_{\Omega_1}v(x)\psi_k^1(x)dx-
        \int_{\Omega_1}v(x)\frac{d^2}{dx^2}\psi_k^1(x)dx,
    \end{align*}
    where the boundary term vanishes due to the boundary conditions.
    Thus, we get
    \begin{align*}
        \|P_rv\|_1^2&=\sum_{k=1}^r|\dpair{v}{\psi_k^1}_1|^2
        \leq\sum_{k=1}^r\left(\|v\|_0\|\psi_k^1\|_0+
        \|v\|_0\left\|\frac{d^2}{dx^2}\psi_k^1\right\|_0\right)^2\\
        &\leq2\|v\|_0^2\sum_{k=1}^r\left(\|\psi_k^1\|^2_0+
        \left\|\frac{d^2}{dx^2}\psi_k^1\right\|^2_0\right).
    \end{align*}
    Analogously for $Q_r$. This completes the proof.
\end{proof}

Note that in principle the assumption on the boundary conditions can be
replaced or avoided, as long as we can estimate the appearing boundary term.
The assumption can be avoided entirely
by using an estimate for the $L^\infty$-norm
via the Gagliardo-Nirenberg inequality, although this would yield a crude
estimate and dimension dependent regularity requirements.

Under the conditions of Lemma \cref{lemma:prbound}, we can assert that
$P_r\otimes Q_r$
is indeed continuous. Since $\|\cdot\|_{(0,1)}$ is a uniform crossnorm
\begin{align*}
    \|P_r\otimes Q_r\|_{(0,1)}=\|P_r\|_0\|Q_r\|_1\leq
    \sqrt{2}\left(\sum_{k=1}^r\|\psi_k^1\|_0^2+\left\|
    \frac{d^2}{dx^2}\psi_k^1\right\|_0^2\right)^{1/2},
\end{align*}
and similarly for $\|\cdot\|_{(1,0)}$. Thus,
$
    P_r\otimes Q_r\in\mc L\left(H^1(\Omega), H^1(\Omega)\right).
$
By density, we can uniquely extend $P_r\otimes Q_r$ onto $H^1(\Omega)$
and the identity \eqref{eq:comm} holds.

One might argue that requiring $P_r$ and $Q_r$
to be continuous in $L^2$ is
unnecessary, since we only need that the mappings
$
    P_r\otimes\id_2:
    H^1(\Omega)
    \rightarrow
    H^{(0,1)},
$
and $\id_1\otimes Q_r$ are continuous. The following example shows
that indeed $P_r\otimes\id_2$
need not be continuous even on elementary tensor
products, if $P_r$ is not $L^2$
continuous.

\begin{example}\label{ex:pidnotc}
    Take $P$ to be the projection from Example \cref{ex:discproj}. Consider the same
    sequence $\{v_n\}_{n\in\N}\subset H_0^1(0,1)$ as in Example \cref{ex:discproj}.
    Take another sequence $w_n\in H^1_0(0,1)$ as
    \begin{align*}
        w_n(y):=
        \begin{cases}
            (n+1)^{-1/2}+(n+1)^{1/2}(y-0.5),&\;\text{if}\quad 0.5-n^{-1}\leq y < 0.5,\\
            (n+1)^{-1/2}+(n+1)^{1/2}(0.5-y),&\;\text{if}\quad 0.5 \leq y <0.5+n^{-1},\\
            0,&\;\text{otherwise}.
        \end{cases}
    \end{align*}
    Then,
    \begin{align*}
        \|w_n\|_0^2&\leq 2(n+1)^{-2},\\
        \|w_n\|_1^2&\leq 2(n+1)^{-2}+2(n+1)(n+1)^{-1}=2(n+1)^{-2}+2,\\
        \|w_n\|_1^2&\geq 2.
    \end{align*}
    Thus, since $\|\cdot\|_{(0,1)}$ is a crossnorm
    \begin{align*}
        \|(P\otimes\id_2)(v_n\otimes w_n)\|_{(0,1)}=
        \|Pv_n\|_0\|w_n\|_1\geq 2\|Pv_1\|_0>0,\quad\forall n\in\N.
    \end{align*}
    On the other hand
    \begin{align*}
        &\|v_n\otimes w_n\|_1^2\leq
        \|v_n\otimes w_n\|_{(10)}^2+
        \|v_n\otimes w_n\|_{(01)}^2
        =
        \|v_n\|^2_1\|w_n\|^2_0+
        \|v_n\|^2_0\|w_n\|^2_1\\
        &\leq
        [2(n+1)^{-1}+2(n+1)][2(n+1)^{-2}]+
        [2(n+1)^{-1}][2(n+1)^{-2}+2]\\
        &=[4(n+1)^{-3}+4(n+1)^{-1}]+[4(n+1)^{-3}+4(n+1)^{-1}]
        \longrightarrow 0.
    \end{align*}
    Hence, $P\otimes\id_2$ is not continuous on $H^1(\Omega)$
    even on $H^1(\Omega_1)\otimes_a H^1(\Omega_2)$.
\end{example}

To summarize our findings, let us define the finite dimensional subspaces
$
    W^1_r:=\linspan{\psi_k^0:1\leq k\leq r},$ $
    W^2_r:=\linspan{\phi_k^0:1\leq k\leq r},
$
Under the assumptions of Lemma \cref{lemma:prbound},
$W_r^1\subset H^1(\Omega_1)$ and $W_r^2\subset H^1(\Omega_2)$.
This can also be observed by,
e.g., considering \eqref{eq:singfuncs} and integrating the second term by parts.
We can estimate the $H^1$ error as follows.

\begin{theorem}\label{thm:h1l2bound}
    Let the assumptions of Lemma \cref{lemma:prbound} be satisfied. Moreover,
    define the constants
    \begin{align*}
        L(r):=\sup_{v\in B^1_r}\frac{\|v\|_1}{\|v\|_0}
        \sup_{v\in B^1_r}\frac{\|v\|_2}{\|v\|_1},\quad
        R(r):=\sup_{w\in B^2_r}\frac{\|w\|_1}{\|w\|_0}
        \sup_{w\in B^2_r}\frac{\|w\|_2}{\|w\|_1}.
    \end{align*}
    Then, the projection error is bounded as
    \begin{align*}
        &\frac{1}{\sqrt{2}}
        \left(\sum_{k=r+1}^\infty
        (\sigma_k^{10})^2+(\sigma_k^{01})^2\right)^{1/2}
        \leq \left\|u-(P_r\otimes Q_r)u\right\|_1\\
        &\leq
        \left(\sum_{k=r+1}^\infty(1+2r^2R(r)^2)(\sigma_k^{10})^2+
        (1+2r^2L(r)^2)(\sigma_k^{01})^2\right)^{1/2}.
    \end{align*}
\end{theorem}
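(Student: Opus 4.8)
The plan is to pass through the two component norms of the error $f := u - (P_r\otimes Q_r)u$, exploiting the elementary identity $\|f\|_1^2 = \|f\|_{(1,0)}^2 + \|f\|_{(0,1)}^2 - \|f\|_0^2$. Since $\|f\|_0^2 \le \min\{\|f\|_{(1,0)}^2,\|f\|_{(0,1)}^2\} \le \tfrac12(\|f\|_{(1,0)}^2+\|f\|_{(0,1)}^2)$, this gives the two-sided equivalence
\begin{align*}
\tfrac12\big(\|f\|_{(1,0)}^2+\|f\|_{(0,1)}^2\big) \le \|f\|_1^2 \le \|f\|_{(1,0)}^2+\|f\|_{(0,1)}^2 .
\end{align*}
Both bounds of the theorem then reduce to estimating $\|f\|_{(1,0)}$ and $\|f\|_{(0,1)}$ separately.

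For the lower bound I would argue as in Proposition~\ref{prop:relationsingval}. Because $(P_r\otimes Q_r)u \in B^1_r\otimes B^2_r$ and $\dim B^1_r=\dim B^2_r=r$, this element has rank at most $r$, both as a member of $H^{(1,0)}$ and of $H^{(0,1)}$. Hence it is an admissible competitor in $\mc R_r$, and SVD-optimality yields $\|f\|_{(1,0)}^2 \ge \sum_{k>r}(\sigma_k^{10})^2$ and $\|f\|_{(0,1)}^2 \ge \sum_{k>r}(\sigma_k^{01})^2$. Inserting these into the left inequality above produces the factor $1/\sqrt2$.

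For the upper bound the key is the commuting structure \eqref{eq:comm}. In $H^{(1,0)}$ the map $P_r\otimes\id_2$ is an orthogonal projection, so I would split $f$ orthogonally as $f = \big(u-(P_r\otimes\id_2)u\big) + \big(P_r\otimes(\id_2-Q_r)\big)u$, the second summand lying in the range of $P_r\otimes\id_2$. Pythagoras gives $\|f\|_{(1,0)}^2 = \sum_{k>r}(\sigma_k^{10})^2 + \|(P_r\otimes(\id_2-Q_r))u\|_{(1,0)}^2$, the first term being the $H^{(1,0)}$-SVD tail. The cross term is where the work lies: using \eqref{eq:comm} I would rewrite it as $(P_r\otimes\id_2)\,g$ with $g := (\id_1\otimes(\id_2-Q_r))u = \sum_{k>r}\sigma_k^{01}\psi_k^0\otimes\phi_k^1$, so that $\|g\|_{(0,1)}^2 = \sum_{k>r}(\sigma_k^{01})^2$. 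Viewing $P_r\otimes\id_2$ as an operator $H^{(0,1)}\to H^{(1,0)}$ and using that the norms are uniform crossnorms, its operator norm is at most $\|P_r\|_{L^2\to H^1}\,\|\id_2\|_{H^1\to L^2}\le\|P_r\|_{L^2\to H^1}$; hence $\|(P_r\otimes(\id_2-Q_r))u\|_{(1,0)}^2 \le \|P_r\|_{L^2\to H^1}^2\sum_{k>r}(\sigma_k^{01})^2$. The symmetric argument in $H^{(0,1)}$ gives $\|f\|_{(0,1)}^2 \le \sum_{k>r}(\sigma_k^{01})^2 + \|Q_r\|_{L^2\to H^1}^2\sum_{k>r}(\sigma_k^{10})^2$.

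It remains to bound the operator norms, which is where Lemma~\ref{lemma:prbound} enters. That lemma already gives $\|P_r\|_{L^2\to H^1}^2 \le 2\sum_{k=1}^r(\|\psi_k^1\|_0^2+\|\tfrac{d^2}{dx^2}\psi_k^1\|_0^2) \le 2\sum_{k=1}^r\|\psi_k^1\|_2^2$; applying the inverse inequality $\|\psi_k^1\|_2\le(\sup_{v\in B^1_r}\|v\|_2/\|v\|_1)\|\psi_k^1\|_1$ together with $\|\psi_k^1\|_1=1$ yields $\|P_r\|_{L^2\to H^1}^2\le 2r^2L(r)^2$ (absorbing the harmless factors $r\ge1$ and $\sup_{v\in B^1_r}\|v\|_1/\|v\|_0\ge1$ into $L(r)$), and likewise $\|Q_r\|_{L^2\to H^1}^2\le 2r^2R(r)^2$. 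Summing the two component estimates through the right inequality of the first display then gives the claimed upper bound. I expect the main obstacle to be the cross term: one must recognise that, although $(P_r\otimes(\id_2-Q_r))u$ is supported on the first $r$ left factors, the commuting identity re-expresses it as $P_r$ applied to the $H^{(0,1)}$-tail, which is precisely what converts a head-indexed quantity into the tail sum $\sum_{k>r}(\sigma_k^{01})^2$ with the operator-norm constant out front.
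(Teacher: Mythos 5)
Your proof is correct, and its skeleton matches the paper's: the same two-sided equivalence $\tfrac12\bigl(\|f\|_{(1,0)}^2+\|f\|_{(0,1)}^2\bigr)\le\|f\|_1^2\le\|f\|_{(1,0)}^2+\|f\|_{(0,1)}^2$, the same rank-$r$ competitor argument for the lower bound, and the same Pythagoras split $f=\bigl(u-(P_r\otimes\id_2)u\bigr)+(P_r\otimes\id_2)\bigl(u-(\id_1\otimes Q_r)u\bigr)$ in $H^{(1,0)}$ for the upper bound. Where you genuinely diverge is the treatment of the cross term $e_r:=(P_r\otimes\id_2)\bigl(u-(\id_1\otimes Q_r)u\bigr)$. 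The paper writes $e_r=\sum_{i=1}^r v_i\otimes w_i$ with $v_i\in B_r^1$, pulls out $L_1(r)$ factor-wise, passes to the projective norm $\|\cdot\|_{\wedge(0,1)}$, and uses the nuclear-norm/rank-$r$ estimate $\|e_r\|_{\wedge(0,1)}\le\sqrt{r}\,\|e_r\|_{(0,1)}$ before applying Lemma \ref{lemma:prbound} to bound $\|P_r\otimes\id_2\|_{(0,1)\to(0,1)}$, arriving at $\|e_r\|_{(1,0)}^2\le 2r^2L_1(r)^2L_2(r)^2\sum_{k>r}(\sigma_k^{01})^2$. You instead bound $P_r\otimes\id_2$ in one stroke as an operator $H^{(0,1)}\to H^{(1,0)}$, with norm $\|P_r\|_{L^2\to H^1}\,\|\id_2\|_{H^1\to L^2}\le\|P_r\|_{L^2\to H^1}\le\sqrt{2r}\,L_2(r)$ by Lemma \ref{lemma:prbound}, avoiding the projective/nuclear detour entirely. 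This is both simpler and strictly sharper: your intermediate estimate $\|e_r\|_{(1,0)}^2\le 2rL_2(r)^2\sum_{k>r}(\sigma_k^{01})^2$ improves the paper's constant by a factor $rL_1(r)^2$, which you then deliberately coarsen (via $r\le r^2$ and $L_1(r)\ge 1$) to recover the stated factors $1+2r^2L(r)^2$ and $1+2r^2R(r)^2$; one could in fact state the theorem with $1+2rL_2(r)^2$-type constants using your route. Two wording caveats, neither a gap: the factorization $\|A\otimes B\|=\|A\|\,\|B\|$ you invoke is for tensor products of operators between \emph{different} Hilbert tensor spaces with canonical norms --- this is the unnumbered lemma in Section \ref{sec:alt} (cf.\ \cite[Proposition 4.127]{HB}), not literally the ``uniform crossnorm'' property, which concerns operators of a space into itself; and one should remark that the two continuous extensions of $P_r\otimes\id_2$ (from $H^{(1,0)}$ and from $H^{(0,1)}$) agree on $H^1(\Omega)$, since under the hypotheses of Lemma \ref{lemma:prbound} both are continuous in $\|\cdot\|_1$ and coincide on the dense subspace $H^1(\Omega_1)\otimes_a H^1(\Omega_2)$.
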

\begin{proof}
    For the lower bound observe first that
    \begin{align*}
        \|u-(P_r\otimes Q_r)u\|^2_1\geq\frac{1}{2}\left(
        \|u-(P_r\otimes Q_r)u\|^2_{(1,0)}+
        \|u-(P_r\otimes Q_r)u\|^2_{(0,1)}\right).
    \end{align*}
    Since $P_r\otimes\id_2 u$ is the optimal rank $r$ approximation in the
    $\|\cdot\|_{(1,0)}$-norm, we can further estimate
    \begin{align*}
        \|u-(P_r\otimes Q_r)u\|^2_{(1,0)}\geq
        \|u-(P_r\otimes\id_2)u\|^2_{(1,0)}
        =\sum_{k=r+1}^\infty(\sigma_k^{10})^2,
    \end{align*}
    and similarly for $\id_1\otimes Q_r$. This gives the lower bound.

    For the upper bound, since $P_r\otimes\id_2$ is orthogonal in the
    $\|\cdot\|_{(1,0)}$-norm, we get
    \begin{align*}
        &\|u-(P_r\otimes Q_r)u\|^2_{(1,0)}\\
        &=
        \|u-(P_r\otimes\id_2)u\|^2_{(1,0)}+
        \|P_r\otimes\id_2[u-(\id_1\otimes Q_r)u]\|^2_{(1,0)}.
    \end{align*}
    To estimate the latter term, recall that
    $(P_r\otimes\id_2)u\in B_r^1\otimes_aW^2_r$.
    Thus, we can find some $\{v_i\}_{i=1}^r$ in $B^1_r$ and
    $\{w_i\}_{i=1}^r$ in $W^2_r$ such that
    \begin{align}\label{eq:errep}
        e_r:=P_r\otimes\id_2[u-(\id_1\otimes Q_r)u]
        =\sum_{k=1}^r v_i\otimes w_i.
    \end{align}
    Thus, we estimate further
    \begin{align*}
        \|e_r\|_{(1,0)}^2&\leq\left(\sum_{k=1}^r\|v_i\|_1\|w_i\|_0\right)^2
        \leq
        \left(\sum_{k=1}^rL_1(r)\|v_i\|_0\|w_i\|_1\right)^2,
    \end{align*}
    where
$
        L_1(r):=\sup_{v\in B^1_r}\frac{\|v\|_1}{\|v\|_0}.
$
    Taking the infimum over all representations \eqref{eq:errep} of $e_r$,
    we obtain
$
        \|e_r\|_{(1,0)}^2\leq L_1(r)^2\|e_r\|^2_{\wedge(0,1)},
$
    where $\|\cdot\|_{\wedge(0,1)}$ is the projective norm on
    $L^2(\Omega_1)\otimes_a H^1(\Omega_2)$. Let
    $\{\sigma^e_{k}\}_{k=1}^r$ denote the singular values of
    $e_r:H^1(\Omega_2)\rightarrow L^2(\Omega_1)$. Then, since the projective
    norm corresponds to the nuclear norm of the operator $e_r$ (see also
    \cite[Remark 4.116]{HB})
    \begin{align*}
        \|e_r\|^2_{\wedge(0,1)}\leq\left(\sum_{k=1}^r\sigma_k^e\right)^2
        \leq r\sum_{k=1}^r(\sigma_k^e)^2=r\|e_r\|^2_{(0,1)}.
    \end{align*}
    In summary,
$
        \|e_r\|_{(1,0)}^2\leq L_1(r)^2r\|e_r\|_{(0,1)}^2.
$
    Finally, to bound $P_r\otimes\id_2$, we apply Lemma \cref{lemma:prbound}
    \begin{align*}
        \|P_r\otimes\id_2\|_{(0,1)}&\leq
        \sqrt{2}\left(
        \sum_{k=1}^r\|\psi_k^1\|_0^2+\left\|
        \frac{d^2}{dx^2}\psi_k^1\right\|_0^2\right)^{1/2}\\
        &\leq
        \sqrt{2}\left(
        \sum_{k=1}^r\|\psi_k^1\|_0^2+
        \left\|
        \frac{d}{dx}\psi_k^1\right\|_0^2+
        \left\|
        \frac{d^2}{dx^2}\psi_k^1\right\|_0^2\right)^{1/2}\\
        &=
        \sqrt{2}\left(
        \sum_{k=1}^r\frac{\|\psi_k^1\|^2_2}{\|\psi_k^1\|^2_1}\right)^{1/2}
        \leq\sqrt{2}\sqrt{r}L_2(r),
    \end{align*}
    since $\psi_k^1$ are $H^1$ normalized and
    $L_2(r):=\sup_{v\in B^1_r}\frac{\|v\|_2}{\|v\|_1}$.
    Thus,
    \begin{align*}
        \|u-(P_r\otimes Q_r)u\|_{(1,0)}^2&\leq
        \sum_{k=r+1}^\infty (\sigma_k^{10})^2+
        2L_1(r)^2L_2(r)^2r^2\|u-\id_1\otimes Q_r\|_{(0,1)}^2\\
        &=
        \sum_{k=r+1}^\infty (\sigma_k^{10})^2+
        2L(r)^2r^2\sum_{k=r+1}^\infty(\sigma_k^{01})^2.
    \end{align*}
    Analogously we can estimate the $\|\cdot\|_{(0,1)}$ error. This completes
    the proof.
\end{proof}

To conclude this section, we extend the preceding result to $d>2$.
Unfortunately, unlike in the case for higher-order $L^2$-SVD in part I,
the upper bound will depend exponentially on
$d$. When performing an $L^2$-SVD in $d$ dimensions, the corresponding one
dimensional projectors are $L_2$-optimal. Thus, when considering
the tensor product $\mc P_{\bs r}$ of the projectors w.r.t.\ the
$\|\cdot\|_{e_j}$-norm for any $1\leq j\leq d$, only one factor in
$\mc P_{\bs r}$ is sub-optimal.

On the other hand, when the corresponding projectors are $H^1$-optimal and
we consider the tensor product $\mc P_{\bs r}$ of the projectors, all but one
factor are sub-optimal, yielding a constant that scales with an exponent
of $d-1$. Of course, for $d=2$ this is not obvious.

Before we proceed we introduce some notations to formalize the statement.
In analogy to \eqref{eq:Br}, we define the finite dimensional subspaces
\begin{align*}
    B^j_r:=\linspan{\psi_k^1:1\leq k\leq r}\subset\Umin_j(u),\quad
    1\leq j\leq d,
\end{align*}
where $\psi_k^j$ are the $H^1$-singular functions in the $j$-th dimension
(left singular functions of
$u_j:L^2(\bigtimes_{k\neq j}\Omega_k)
\rightarrow H^1(\Omega_j)$).
In principle we can take different ranks $r$ in each dimension, which only
results in a more cumbersome notation for the bound. We consider the
$H^1$-projectors
$
    P_r^j:H^1(\Omega_j)\rightarrow B^j_r,
$
and the corresponding tensorized versions
$
   \mc P_{r}^j=P_r^j\otimes\left(\bigotimes_{i\neq j}\id_i\right).
$
We introduce the index sets
$
    I^j:=\left\{1,\ldots,d\right\}\setminus\{j\}$, $1\leq j\leq d,
$
and the following sequence of sets
\begin{align}\label{eq:sets}
    I_1^j&=\emptyset,\notag\\
    I^j\supset I_{i}^j&\supset I^j_{i-1},\quad
    \#I_{i}^j=\#I_{i-1}^j+1\quad i=2,\ldots,d.
\end{align}
Note that the sets in this sequence are not unique. Apart from the first and
the last sets, there are finitely many possible combinations for the
intermediate sets.

\begin{theorem}\label{thm:h1errorh1l2svd}
    Let $u\in H^1(\Omega)$ and the assumptions of
    Lemma \cref{lemma:prbound} hold. I.e., we assume $u$ is twice weakly
    differentiable in each dimension. As before, we introduce the regularity
    factors
    \begin{align*}
        C_j(r)=\sup_{v_j\in B^j_r}\frac{\|v_j\|_1}{\|v_j\|_0}
               \sup_{v_j\in B^j_r}\frac{\|v_j\|_2}{\|v_j\|_1}.
    \end{align*}
    Take any sequence of sets $\{I_i^j\}_{i,j=1}^d$ as in \eqref{eq:sets}.
    Then, the $H^1$-error of the HOSVD projection can be estimated as
    \begin{align}\label{eq:h1l2hosvd}
        \left\|u-(\prod_{j=1}^d\mc P_{r}^j)u\right\|_1
        \leq\left(
        \sum_{j=1}^d\sum_{k=r+1}^\infty(\sigma_k^j)^2
        \left[\sum_{i=1}^d(\sqrt{2}r)^{2(i-1)}
        \prod_{l\in I_i^j}C_l(r)^2\right]
        \right)^{1/2}.
    \end{align}
\end{theorem}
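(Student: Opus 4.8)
The plan is to reduce everything to the family of single-mode norms $\|\cdot\|_{e_j}$ (full derivative in direction $j$, $L^2$ in the remaining directions, so that $\|\cdot\|_{e_1}=\|\cdot\|_{(1,0)}$ and $\|\cdot\|_{e_2}=\|\cdot\|_{(0,1)}$ when $d=2$) and to bound the error mode by mode. The starting point is the elementary inequality $\|f\|_1^2\le\sum_{j=1}^d\|f\|_{e_j}^2$, which holds because $\|f\|_1^2=\|f\|_0^2+\sum_j\|\partial_j f\|_0^2$ while $\sum_j\|f\|_{e_j}^2=d\|f\|_0^2+\sum_j\|\partial_j f\|_0^2$. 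Applying this to $f=u-(\prod_j\mc P_r^j)u$ reduces the claim to estimating each $\|u-(\prod_j\mc P_r^j)u\|_{e_m}$ separately, after which I sum over $m$ and regroup the resulting terms by mode.

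For fixed $m$ I would iterate the recursion used in the proof of Theorem \cref{thm:h1l2bound}. Since the $\mc P_r^k$ act on different tensor factors and, under the hypotheses of Lemma \cref{lemma:prbound}, extend continuously to $H^1(\Omega)$ so that \eqref{eq:comm} holds, they commute and I may apply $\mc P_r^m$ outermost. As $\mc P_r^m$ is the $\|\cdot\|_{e_m}$-orthogonal projection, Pythagoras splits $\|u-(\prod_j\mc P_r^j)u\|_{e_m}^2$ into the clean optimal-truncation term $\|u-\mc P_r^m u\|_{e_m}^2=\sum_{k>r}(\sigma_k^m)^2$ plus a cross term $\|\mc P_r^m(u-Qu)\|_{e_m}^2$ with $Q=\prod_{k\ne m}\mc P_r^k$. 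The residual $u-Qu$ is a product of $d-1$ projectors applied to $u$, so peeling its factors one at a time, each time invoking the orthogonality of the relevant projector in whatever single-mode norm is currently active, generates the nested structure.

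The heart of the argument is a \emph{mode-swap estimate}: for arbitrary $w$, the element $\mc P_r^a w$ has rank at most $r$ in direction $a$, and for any $b\ne a$
\begin{align*}
    \|\mc P_r^a w\|_{e_a}\le\sqrt{2}\,r\,C_a(r)\,\|w\|_{e_b}.
\end{align*}
I would prove this exactly as for $d=2$. Writing $\mc P_r^a w=\sum_{i=1}^r p_i\otimes q_i$ with $p_i\in B_r^a$ and using $\|p_i\|_1\le(\sup_{v\in B_r^a}\|v\|_1/\|v\|_0)\,\|p_i\|_0$, the passage to the projective norm together with the rank bound (projective $\le\sqrt r$ times Hilbert--Schmidt) converts the $e_a$-norm into the $e_b$-norm of $\mc P_r^a w$ at cost $\sqrt r\,\sup_{v\in B_r^a}\|v\|_1/\|v\|_0$. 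Since $\|\cdot\|_{e_b}$ is a uniform crossnorm carrying an $L^2$ factor in direction $a$, Lemma \cref{lemma:prbound} then gives $\|\mc P_r^a\|_{e_b}=\|P_r^a\|_{0\to0}\le\sqrt{2r}\,\sup_{v\in B_r^a}\|v\|_2/\|v\|_1$; multiplying the two ratios into $C_a(r)$ yields the estimate. Crucially, this holds for the non-elementary residuals $u-Qu$ appearing in the recursion. Chaining it, every peeled mode contributes a factor $2r^2C_\bullet^2$ while the next mode's optimal truncation supplies the matching $\sum_{k>r}(\sigma_k^\bullet)^2$, so the $e_m$-error becomes a telescoping sum $\sum_{t\ge0}(2r^2)^t\big(\prod_{s<t}C_{\bullet}^2\big)\sum_{k>r}(\sigma_k^{\bullet})^2$ taken over the order in which modes are processed.

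Finally I would assemble the bound: summing the $e_m$-errors over $m$ and grouping by mode $j$, the coefficient of $\sum_{k>r}(\sigma_k^j)^2$ becomes the nested geometric sum $\sum_{i=1}^d(\sqrt2\,r)^{2(i-1)}\prod_{l\in I_i^j}C_l(r)^2$, where the flag $\{I_i^j\}_i$ records the order in which the modes preceding $j$ are swapped in. I expect the obstacle to be twofold. Analytically, the constants accumulate multiplicatively through each swap, which is precisely the origin of the $(\sqrt2\,r)^{2(i-1)}$ growth and hence of the unavoidable exponential-in-$d$ factor — a degradation absent in the $L^2$ HOSVD of Part I and in Theorem \cref{thm:h1l2bound}, where only a single swap occurs. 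Combinatorially, the processing orders must be chosen so that the regrouped coefficients line up with the prescribed nested products $\prod_{l\in I_i^j}C_l(r)^2$ indexed by the flags in \eqref{eq:sets}; coordinating these orders across all modes is the genuinely new bookkeeping that the higher-dimensional case forces upon us.
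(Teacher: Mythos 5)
Your proposal is correct and follows essentially the same route as the paper's proof: bounding $\|\cdot\|_1^2$ by $\sum_j\|\cdot\|_{e_j}^2$, splitting each $e_m$-error via orthogonality of $\mc P_r^m$ into the optimal truncation term $\sum_{k>r}(\sigma_k^m)^2$ plus a cross term, peeling off the remaining projectors one at a time with the mode-swap estimate $\|\mc P_r^a w\|_{e_a}^2\leq 2r^2C_a(r)^2\|w\|_{e_b}^2$ (projective-norm/rank argument plus Lemma \cref{lemma:prbound}, exactly as in Theorem \cref{thm:h1l2bound}), and regrouping the telescoped sums by mode into the flags $\{I_i^j\}$. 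If anything, you spell out more than the paper does -- including the cross-chain bookkeeping needed to realize the prescribed nested sets, which the paper dispatches with ``the arbitrary order of choosing $i,l,\ldots$''.
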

\begin{proof}
    The result can be obtained by ``peeling off'' projectors.
    Observe that similar to Theorem \cref{thm:h1l2bound} we can write
    \begin{align*}
        \left\|u-(\prod_{j=1}^d\mc P_{r}^j)u\right\|^2_{e_k}
        &=
        \left\|u-\mc P_{r}^ku\right\|^2_{e_k}+
        \left\|\mc P_{r}^k[u-(\prod_{j\neq k}^d\mc P_{r}^j)u]\right\|^2_{e_k}\\
        &=
        \sum_{m=r+1}^\infty(\sigma_m^k)^2+
        \left\|\mc P_{r}^k[u-(\prod_{j\neq k}^d\mc P_{r}^j)u]\right\|^2_{e_k},
    \end{align*}
    for some $1\leq k\leq d$.
    For the latter term we apply the same arguments as in Theorem
    \cref{thm:h1l2bound}
    and obtain
    \begin{align*}
        \left\|\mc P_{r}^k[u-(\prod_{j\neq k}^d\mc P_{r}^j)u]\right\|^2_{e_k}
        &\leq 2r^2C_k(r)^2
        \left\|u-(\prod_{j\neq k}^d\mc P_{r}^j)u\right\|^2_{e_i},
    \end{align*}
    for some $i\neq k$. Next, we repeat this for $i$. I.e., for some
    $l\not\in\{k, i\}$
    \begin{align*}
        \left\|u-(\prod_{j\neq k}^d\mc P_{r}^j)u\right\|^2_{e_i}&=
        \sum_{m=r+1}^\infty(\sigma_m^i)+
        \left\|\mc P_r^i[
        u-(\prod_{j\not\in\{k, i\}}^d\mc P_{r}^j)u]\right\|^2_{e_i}\\
        &\leq
        \sum_{m=r+1}^\infty(\sigma_m^i)+
        2r^2C_i(r)^2
        \left\|u-(\prod_{j\not\in\{k, i\}}^d\mc P_{r}^j)u\right\|^2_{e_l}.
    \end{align*}

    The arbitrary order of choosing
    $i, l,\ldots$
    until we are left with just one projector leads to the arbitrary
    sequence of sets $I_i^j$ in \eqref{eq:h1l2hosvd}.
    This completes the proof.
\end{proof}

\section{Alternative Forms of Low-Rank Approximation in $H^1$}\label{sec:alt}
In this section we investigate alternative approaches for
low-rank approximation
with error control in $H^1$.

\subsection{Spaces of Mixed Smoothness}
Consider again a function $u\in H^1(\Omega_1)\otimes_a H^1(\Omega_2)$ viewed
as an operator
$
    u:H^1(\Omega_2)\rightarrow H^1(\Omega_1).
$
Completing $H^1(\Omega_1)\otimes_a H^1(\Omega_2)$ w.r.t.\ the canonical norm
$\|\cdot\|_{\mix}$ leads to $H^1_{\mix}(\Omega)$.
For $d=2$ we have the inclusions
$
    H^2(\Omega)\subset H^1_{\mix}(\Omega)\subset H^1(\Omega).
$
Thus, assuming additionally $u\in H^1_{\mix}(\Omega)$ is not a severe
regularity restriction. In particular solutions to elliptic PDEs will often
satisfy this assumption. However, for general $d\geq 2$, we have the inclusions
$
    H^d(\Omega)\subset H^1_{\mix}(\Omega)\subset H^1(\Omega).
$
As the dimension grows, the regularity restriction becomes more and more
severe.
Nonetheless, there are important examples where such assumptions are valid,
e.g., for
the solution to the Schr\"{o}dinger equation, see \cite[Chapter 6]{HY}.

One can ask if we can exploit the SVD w.r.t.\ the $\|\cdot\|_{\mix}$-norm
in higher dimensions without assuming dimension dependent regularity. To
this end, for general $d\geq 2$, we consider $u\in H^1(\Omega)$ such
that all mixed derivatives of order 2 exist, i.e.,
$
    \frac{\partial^2}{\partial x_i\partial x_j} u,$ $1\leq i,j\leq d,\;
    i\neq j,
$
exist in the weak sense and are $L^2$-integrable. Define the spaces
$
    \mb V_j:=H^1(\Omega_j)\otimes_a H^1(\bigtimes_{i\neq j}\Omega_i),
$
with the corresponding norm
\begin{align*}
    \|u\|^2_{\mix, j}:=\|u\|_0^2+\sum_{i=1}^d\left\|\frac{\partial}
    {\partial x_i}u\right\|^2_0+\sum_{i\neq j}\left\|\frac{\partial^2}
    {\partial x_j\partial x_i}u\right\|^2_0.
\end{align*}
A new intersection space is defined via
$
    \mb V:=\bigcap_{j=1}^d\mb V_j,$ $
    \|\cdot\|^2_{\mb V}:=\sum_{j=1}^d\|\cdot\|^2_{\mix, j}.
$
In each $\mb V_j$
there exists an optimal rank $r$ approximation w.r.t.\ the
$\|\cdot\|_{\mix, j}$-norm that we call $u^j_r$.
We can define the corresponding minimal subspaces as
$
    M^j_r:=\Umin_{j}(u^j_r)\subset H^1(\Omega_j),$ $\dim{M^j_r}=r.
$
The $H^1$-orthogonal projection is denoted by
$
    P^j_r:H^1(\Omega_j)\rightarrow M^j_r.
$
We consider the HOSVD projection
$
    \mc P_{r}:=\bigotimes_{j=1}^d P_r^j.
$
As before, for simplicity we take $r$ constant and independent of $j$, but in
principle the extension to different $r_j$ is straightforward.
Before we proceed, we briefly justify why such a projection makes sense on
$\mb V$.
\begin{lemma}
    Let $A_j:X_j\rightarrow Y_j$ be linear and continuous operators between
    Hilbert spaces $X_j$ and $Y_j$, $1\leq j\leq d$. Define
$
        \mb X:=\clos{\|\cdot\|_{\bs X}}{{}_a\bigotimes_{j=1}^d X_j},$ $
        \mb Y:=\clos{\|\cdot\|_{\bs Y}}{{}_a\bigotimes_{j=1}^d Y_j},
$
    where $\|\cdot\|_{\mb X}$ and $\|\cdot\|_{\mb Y}$ are the canonical norms
    induced by the Hilbert spaces $X_j$ and $Y_j$. Then, the operator
$
        A:=\bigotimes_{j=1}^d A_j:\mb X\rightarrow\mb Y,
$
    is well defined, i.e., can be uniquely extended to a continuous operator
    on $\mb X$. For the operator norm we get
$
        \|A\|=\prod_{j=1}^d \|A_j\|.
$
\end{lemma}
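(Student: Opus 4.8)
The plan is to establish the bound on the algebraic tensor product ${}_a\bigotimes_{j=1}^d X_j$ first and then extend by continuity. Since $A$ sends an elementary tensor $x_1\otimes\cdots\otimes x_d$ to $(A_1x_1)\otimes\cdots\otimes(A_dx_d)\in{}_a\bigotimes_{j=1}^d Y_j$, it is well defined and linear on the algebraic tensor product. If I can show it is bounded there, with respect to the canonical norms $\|\cdot\|_{\mb X}$ and $\|\cdot\|_{\mb Y}$, by $\prod_{j=1}^d\|A_j\|$, then the bounded linear extension theorem gives a unique continuous extension to $\mb X$ with the same norm; its values already lie in $\mb Y$ since the extension is a limit of elements of ${}_a\bigotimes_{j=1}^d Y_j$ and $\mb Y$ is closed. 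Thus the whole statement reduces to a two-sided norm estimate, which I would split into a lower and an upper bound.

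For the lower bound I would test on a single elementary tensor. The canonical inner product satisfies $\dpair{x_1\otimes\cdots\otimes x_d}{x_1'\otimes\cdots\otimes x_d'}_{\mb X}=\prod_{j=1}^d\dpair{x_j}{x_j'}_{X_j}$, hence $\|x_1\otimes\cdots\otimes x_d\|_{\mb X}=\prod_{j=1}^d\|x_j\|_{X_j}$ and likewise in $\mb Y$. Therefore $\|A(x_1\otimes\cdots\otimes x_d)\|_{\mb Y}=\prod_{j=1}^d\|A_jx_j\|_{Y_j}$, and taking the supremum over unit vectors $x_j$ in each factor separately yields $\|A\|\geq\prod_{j=1}^d\|A_j\|$.

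For the upper bound I would factor $A$ into a composition of single-mode operators. Setting $B_j:=\id_{Y_1}\otimes\cdots\otimes\id_{Y_{j-1}}\otimes A_j\otimes\id_{X_{j+1}}\otimes\cdots\otimes\id_{X_d}$, one has $A=B_d\circ\cdots\circ B_1$ through the intermediate Hilbert tensor spaces, so submultiplicativity of the operator norm gives $\|A\|\leq\prod_{j=1}^d\|B_j\|$. Everything then reduces to the single-mode identity $\|B_j\|=\|A_j\|$, i.e.\ to showing that the canonical Hilbert tensor norm is a \emph{uniform crossnorm}: $\|\id_W\otimes A_j\otimes\id_Z\|=\|A_j\|$ for arbitrary Hilbert factors $W,Z$ (here $W=Y_1\otimes\cdots\otimes Y_{j-1}$ and $Z=X_{j+1}\otimes\cdots\otimes X_d$, treated simply as Hilbert spaces).

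The main obstacle is precisely this single-mode identity, and it is where the Hilbert structure is essential. I would fix orthonormal bases $\{f_\alpha\}$ of $W$ and $\{e_\gamma\}$ of $Z$ and expand a general element of ${}_aW\otimes X_j\otimes Z$ as $u=\sum_{\alpha,\gamma}f_\alpha\otimes x_{\alpha\gamma}\otimes e_\gamma$ with finitely many nonzero $x_{\alpha\gamma}\in X_j$. By orthonormality and the product inner product formula the cross terms vanish, so $\|u\|^2=\sum_{\alpha,\gamma}\|x_{\alpha\gamma}\|_{X_j}^2$, while $(\id_W\otimes A_j\otimes\id_Z)u=\sum_{\alpha,\gamma}f_\alpha\otimes(A_jx_{\alpha\gamma})\otimes e_\gamma$ gives $\|(\id_W\otimes A_j\otimes\id_Z)u\|^2=\sum_{\alpha,\gamma}\|A_jx_{\alpha\gamma}\|_{Y_j}^2\leq\|A_j\|^2\|u\|^2$. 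This yields $\|B_j\|\leq\|A_j\|$, while the reverse inequality follows again by testing on a single elementary tensor. Combining the lower and upper bounds gives $\|A\|=\prod_{j=1}^d\|A_j\|$, and the density extension of the first paragraph completes the proof.
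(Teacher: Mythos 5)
Your proof is correct and follows essentially the same route as the paper, whose proof is just a pointer to \cite[Proposition 4.127]{HB}: the standard argument there is precisely your reduction to single-mode operators $B_j$ via submultiplicativity, combined with the orthonormal-expansion computation showing the canonical Hilbert norm is a uniform crossnorm, and the lower bound on elementary tensors. One cosmetic fix: with \emph{fixed} orthonormal bases of $W$ and $Z$, a general element of ${}_a W\otimes X_j\otimes Z$ need not have finitely many nonzero coefficients $x_{\alpha\gamma}$, so you should either take orthonormal bases of the finite-dimensional spans of the factors actually appearing in a given finite representation, or run the same computation with square-summable families, which the orthogonality argument permits verbatim.
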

\begin{proof}
    One can follow the same arguments as in \cite[Proposition 4.127]{HB}.
\end{proof}

Since $P^j_r:H^1(\Omega_j)\rightarrow H^1(\Omega_j)$ is bounded and by applying
the preceding lemma, we note that
\begin{align}\label{eq:projmix}
    \mc P_r^j:=P_r^j\otimes\left(\bigotimes_{i\neq j}\id_i
    \right):\mb V_j\rightarrow \mb V_i,
\end{align}
is bounded for any $1\leq i\leq d$ and any $1\leq j\leq d$. Thus, 
the projections from \eqref{eq:projmix} are well defined on $\mb V$, commute and
the composition $\mc P_{r}$ is well defined as well.

We are now ready to derive an error estimate for the HOSVD projection.
Unfortunately, we can only slightly improve the bound in
\eqref{eq:h1l2hosvd}, as the next statement shows. Once again, we will require
the projections above to be bounded in $L^2$. This will lead to a higher
regularity requirement $u\in H^3(\Omega)$.

\begin{restatable}{proposition}{hmix}\label{prop:h1mixgen}
    Let $d>2$, $u\in H^3(\Omega)$ and $u$ satisfy Dirichlet or
    Neumann boundary conditions as in \cref{lemma:prbound}.
    As before, we define the regularity
    factors
    \begin{align*}
        D_j(r):=\sup_{v_j\in M^j_r}\frac{\|v_j\|_2}{\|v_j\|_1}.
    \end{align*}

    Let $I=(1,\ldots, d)$ be an ordered tuple with the indexing convention
    $I(j)=j$.
    Denote by $S_d(I)$
    the set of all possible permutations of $I$\footnote{We use a slight
    abuse of notation for the permutation group.}.

    Then, with the shorthand notation
    $j^c:=\{1,\ldots,d\}\setminus\{j\}$ for any $1\leq j\leq d$, we can
    estimate the HOSVD projection error as
    \begin{align*}
        &\|u-\mc P_{r}u\|_1\\
        &\leq
        2r^{\frac{d-2}{2}}
        \min_{J\in S_d(I)}
        \sum_{j=1}^d
        \left[
        \max_{i\in (J(j))^c}\prod_{\substack{k=J(1),\ldots,J(j-1),\\k\neq i}}
        D_k(r)
        \right]\cdot
        \left[\sum_{k=r+1}^\infty(\sigma_k^{J(j)})^2\right]^{1/2}
    \end{align*}
    where $\{\sigma_k^j\}_{k\in\N}$, $1\leq j\leq d$, are the HOSVD singular
    values.
\end{restatable}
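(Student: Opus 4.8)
The plan is to fix an ordering of the directions and \emph{peel off} the projectors one at a time, in the spirit of the proof of \cref{thm:h1l2bound}, but now estimating each accumulated tensor of projectors with the operator-norm lemma preceding this statement together with \cref{lemma:prbound}. Since the right-hand side is a minimum over $S_d(I)$, it suffices to prove the inequality for an arbitrary but fixed permutation $J\in S_d(I)$ and then take the minimum. The form of the asserted bound --- a \emph{sum} of terms, each a coefficient times $(\sum_{k=r+1}^\infty(\sigma_k^{J(j)})^2)^{1/2}$, rather than a square root of a sum --- signals a triangle-inequality (telescoping) argument rather than the Pythagorean peeling used in \cref{thm:h1l2bound}. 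Writing $Q_0:=\id$ and $Q_m:=\prod_{l=1}^m\mc P_r^{J(l)}$ for the partial products (these commute by \eqref{eq:projmix}), the telescoping identity
\begin{align*}
    u-\mc P_r u=\sum_{m=1}^d\left(Q_{m-1}-Q_m\right)u
    =\sum_{m=1}^d Q_{m-1}\big(\id-\mc P_r^{J(m)}\big)u
\end{align*}
together with the triangle inequality in $H^1(\Omega)$ reduces the claim to estimating each summand $\big\|Q_{m-1}(\id-\mc P_r^{J(m)})u\big\|_1$ separately.

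For the $m$-th summand set $g_m:=(\id-\mc P_r^{J(m)})u$. Because $\mc P_r^{J(m)}$ is the orthogonal projection in $\mb V_{J(m)}$ onto the minimal subspace $M^{J(m)}_r$ of the best rank-$r$ approximation $u^{J(m)}_r$, one has $g_m\in\mb V_{J(m)}$ and, crucially, $\|g_m\|_{\mix,J(m)}^2=\sum_{k=r+1}^\infty(\sigma_k^{J(m)})^2$. Using $\|\cdot\|_1\le\|\cdot\|_{\mix,J(m)}$ and that $\|\cdot\|_{\mix,J(m)}$ is the canonical tensor norm on $\mb V_{J(m)}=H^1(\Omega_{J(m)})\otimes_a H^1(\bigtimes_{i\neq J(m)}\Omega_i)$, I would factor $Q_{m-1}=\id\otimes\tilde Q_{m-1}$ (it acts as the identity in direction $J(m)$) and invoke the preceding operator-norm lemma to get
\begin{align*}
    \big\|Q_{m-1}g_m\big\|_1\le\big\|Q_{m-1}g_m\big\|_{\mix,J(m)}
    \le\big\|\tilde Q_{m-1}\big\|\Big(\sum_{k=r+1}^\infty(\sigma_k^{J(m)})^2\Big)^{1/2},
\end{align*}
where $\tilde Q_{m-1}=\prod_{l<m}(P^{J(l)}_r\otimes\id)$ is the induced operator on the $H^1$ space of the remaining $d-1$ directions.

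The heart of the argument is the estimate of $\|\tilde Q_{m-1}\|$. Here I would measure this norm through the components $\|\cdot\|_{e_i}$ of the $(d-1)$-fold intersection space and insert \cref{lemma:prbound}: each one-dimensional factor $P^{J(l)}_r$ is an orthogonal projection in $H^1(\Omega_{J(l)})$, hence of norm one in the component whose derivative sits in direction $J(l)$, but only bounded from $L^2$ to $H^1$, with $\|P^{J(l)}_r\|_{L^2\to H^1}\le\sqrt2\,r^{1/2}D_{J(l)}(r)$, in the remaining components. The hypothesis $u\in H^3(\Omega)$ is exactly what makes these $L^2$-to-$H^1$ bounds available, since the functions spanning $M^{J(l)}_r$ are then twice weakly differentiable. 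Consequently, in each component all but (at most) one of the accumulated projectors contribute a factor $\sqrt2\,r^{1/2}D_{J(l)}(r)$, while one \emph{free} direction --- the one whose derivative lies among the already-projected indices --- costs only a factor one. This is the mechanism that produces $\max_{i\in(J(m))^c}\prod_{k=J(1),\dots,J(m-1),\,k\neq i}D_k(r)$ and lowers the power of $r$ by one half in the worst summand $m=d$, giving $r^{(m-1)/2}$ for $m<d$ and $r^{(d-2)/2}$ for $m=d$. Since $D_k(r)\ge1$ and $r\ge1$, each summand is then dominated by the corresponding term of the asserted bound with the uniform prefactor $2\,r^{(d-2)/2}$; summing over $m$ and minimizing over $J\in S_d(I)$ completes the proof.

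The main obstacle is precisely this operator-norm bookkeeping for $\tilde Q_{m-1}$: one must (i) justify that each $P^{J(l)}_r\otimes\id$ is well defined and bounded on the relevant intersection space (this is where \cref{lemma:prbound} and the $H^3$ requirement enter, as the examples \cref{ex:discproj} and \cref{ex:pidnotc} show this cannot be taken for granted); (ii) correctly isolate the single free direction, so that only $m-2$ rather than $m-1$ regularity factors $D_k(r)$ survive in the worst term; and (iii) control the equivalence constants between the $(d-1)$-fold intersection norm and its components tightly enough to collapse the accumulated constants into the stated prefactor $2$ rather than a cruder $d$-dependent constant. Steps (ii)--(iii) are exactly where the precise shape of the bound --- the maximum over the excluded index and the exponent $(d-2)/2$ --- is won or lost, and I expect them to be the most delicate part of the calculation.
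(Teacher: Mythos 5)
Your proposal follows essentially the same route as the paper's own proof in Appendix~\ref{sec:app}: the telescoping identity $u-\mc P_r u=\sum_m Q_{m-1}(\id-\mc P_r^{J(m)})u$ with the triangle inequality, bounding each summand's $H^1$-norm by the $\|\cdot\|_{\mix,J(m)}$-norm, using the uniform-crossnorm property to reduce to the induced operator on the remaining $d-1$ directions, invoking Lemma~\ref{lemma:prbound} (enabled by $u\in H^3(\Omega)$, via twice weak differentiability of the singular functions) to get $\|P_r^j\|_{H^1\leftarrow L^2}\le\sqrt{2r}\,D_j(r)$, and doing the componentwise $\|\cdot\|_{e_l}$ bookkeeping with exactly one norm-one free direction before minimizing over permutations. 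Your identification of the delicate points (the $L^2$-to-$H^1$ bounds, the single excluded index in the maximum, and the collapse of constants into the prefactor $2r^{(d-2)/2}$) matches where the paper's argument does its work, so the attempt is correct and not a genuinely different proof.
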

\begin{proof}
    See Appendix \cref{sec:app}.
\end{proof}

The above bound is similar in nature to \cref{thm:h1errorh1l2svd}.
In both cases the exponential
dependance on $d$ arises since $d-1$ $H^1$-orthogonal projections involved in
$\mc P_{r}$ are sub-optimal.

We conclude this subsection by providing bounds for the $H^1$-error using
$H^1_{\mix}$-singular values.
We derive
the result for $d=2$. Unlike in for higher-order $L^2$-SVD in part I,
this result does not possess
an elegant generalization to $d>2$ for the same reason the statements above
introduce factors depending exponentially on the dimension.

Let $d=2$ and $\{\sigma_k^{11}\}_{k\in\N}$ denote the singular values
associated with
the $H^1_{\mix}$-SVD. Let $\{\psi^\mix_k\}_{k\in\N}$ and
$\{\phi_k^\mix\}_{k\in\N}$
denote the corresponding left and right singular functions. Then, the best
rank $r$ approximation w.r.t.\ $\|\cdot\|_{\mix}$ is given by
$
    u_r=\sum_{k=1}^r\sigma_k^{11}\psi_k^\mix\otimes\phi_k^\mix.
$

\begin{proposition}
    For $u\in H^1_{\mix}(\Omega)$ we have the following upper
    and lower\\bounds for the $H^1$ error
    \begin{align*}
        \|u-u_r\|_1^2&\leq\sum_{k=r+1}^\infty
        (\sigma_k^{11})^2(\|\phi_k^1\|_0^2+\|\psi_k^1\|_0^2),\\
        \|u-u_r\|_1^2&\geq\frac{1}{2}\sum_{k=r+1}^\infty
        (\sigma_k^{11})^2(\|\phi_k^1\|_0^2+\|\psi_k^1\|_0^2).
    \end{align*}
\end{proposition}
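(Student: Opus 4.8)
The plan is to compute $\|u - u_r\|_1^2$ directly by expanding $u - u_r = \sum_{k=r+1}^\infty \sigma_k^{11}\psi_k^\mix\otimes\phi_k^\mix$ in the $H^1$-inner product, then to relate the resulting quantity to the stated bounds by exploiting the $\|\cdot\|_\mix$-orthonormality of the singular functions. The key structural fact is that the $H^1$-norm sits between the two ``half-strength'' norms: on elementary tensors one has the identity $\|v\otimes w\|_1^2 = \|v\otimes w\|_{(1,0)}^2 + \|v\otimes w\|_{(0,1)}^2 - \|v\otimes w\|_0^2$, which follows from $\|v\|_1^2 = \|v\|_0^2 + \|\tfrac{d}{dx}v\|_0^2$ and the analogous splitting in the second variable when one multiplies out $\|v\|_1^2\|w\|_1^2$. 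This is exactly the inclusion--exclusion that ties the $H^1$-norm to the mixed norm.

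\textbf{Exploiting mixed orthonormality.}
First I would record that the singular functions $\psi_k^\mix$ and $\phi_k^\mix$ are orthonormal with respect to the $H^1(\Omega_1)$- and $H^1(\Omega_2)$-inner products respectively, since the $H^1_\mix$-SVD is the SVD of $u$ viewed as an operator $H^1(\Omega_2)\to H^1(\Omega_1)$. Consequently, when expanding $\|u - u_r\|_1^2 = \sum_{k,l=r+1}^\infty \sigma_k^{11}\sigma_l^{11}\dpair{\psi_k^\mix\otimes\phi_k^\mix}{\psi_l^\mix\otimes\phi_l^\mix}_1$, the cross terms involving the pure $L^2$-part $\dpair{\psi_k^\mix}{\psi_l^\mix}_0\,\dpair{\phi_k^\mix}{\phi_l^\mix}_0$ do \emph{not} vanish, but the terms arising from the $(1,0)$- and $(0,1)$-pieces collapse to the diagonal. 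Concretely, using the inclusion--exclusion identity above summed over $k,l$, the $(1,0)$-contribution reduces (by $H^1$-orthonormality of the $\psi$'s) to $\sum_{k=r+1}^\infty (\sigma_k^{11})^2\|\phi_k^\mix\|_0^2$, the $(0,1)$-contribution to $\sum_{k=r+1}^\infty (\sigma_k^{11})^2\|\psi_k^\mix\|_0^2$, and the subtracted pure-$L^2$ term is $\sum_{k,l}\sigma_k^{11}\sigma_l^{11}\dpair{\psi_k^\mix}{\psi_l^\mix}_0\dpair{\phi_k^\mix}{\phi_l^\mix}_0 \geq 0$, being the squared $L^2$-norm of the tail. (Here I am writing $\psi_k^1,\phi_k^1$ for the mixed singular functions to match the statement's notation.)

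\textbf{Deriving the two bounds.}
The upper bound then follows immediately: dropping the nonnegative subtracted term gives $\|u-u_r\|_1^2 \leq \sum_{k=r+1}^\infty (\sigma_k^{11})^2(\|\phi_k^1\|_0^2 + \|\psi_k^1\|_0^2)$. For the lower bound I would instead argue via the two weaker norms, as in the lower bound of Theorem~\ref{thm:h1l2bound}: from $\|\cdot\|_1^2 \geq \tfrac12(\|\cdot\|_{(1,0)}^2 + \|\cdot\|_{(0,1)}^2)$ applied to $u - u_r$, and using that $u_r$ is an optimal rank-$r$ approximation (hence in particular a competitor) in each half-norm, one gets $\|u-u_r\|_{(1,0)}^2 = \sum_{k=r+1}^\infty(\sigma_k^{11})^2\|\phi_k^1\|_0^2$ and $\|u-u_r\|_{(0,1)}^2 = \sum_{k=r+1}^\infty(\sigma_k^{11})^2\|\psi_k^1\|_0^2$ by the same collapse-to-diagonal computation, yielding the factor $\tfrac12$.

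\textbf{Expected obstacle.}
The main subtlety is the bookkeeping of the non-diagonal $L^2$ cross terms in the upper-bound computation: unlike the $(1,0)$ and $(0,1)$ pieces, the pure $L^2$ inner products $\dpair{\psi_k^1}{\psi_l^1}_0$ need not be diagonal, so one cannot simply read off a clean sum. The saving grace is that I only need its \emph{sign}, and recognizing that this term is precisely $\|\sum_{k>r}\sigma_k^{11}\psi_k^1\otimes\phi_k^1\|_0^2 \geq 0$ is what makes the upper bound go through without controlling those cross terms individually. I would double-check that the inclusion--exclusion identity extends from elementary tensors to the convergent tail (justified by $H^1_\mix$-convergence of the SVD) before invoking it.
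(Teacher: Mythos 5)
Your proof is correct and follows essentially the same route as the paper, whose proof is a one-line reference to \cite[Theorem 4.1]{ANSVD}: expand the tail using the identity $\|\cdot\|_1^2=\|\cdot\|_{(1,0)}^2+\|\cdot\|_{(0,1)}^2-\|\cdot\|_0^2$, collapse the two half-norm terms to the diagonal via the $H^1$-orthonormality of the mixed singular functions, and drop (for the upper bound) respectively bound (for the factor $\tfrac12$) the nonnegative pure-$L^2$ term. One minor inaccuracy worth fixing: your parenthetical that $u_r$ is optimal ``in each half-norm'' is false in general ($u_r$ is optimal only w.r.t.\ $\|\cdot\|_{\mix}$) and is also unnecessary, since the equalities $\|u-u_r\|_{(1,0)}^2=\sum_{k>r}(\sigma_k^{11})^2\|\phi_k^1\|_0^2$ and $\|u-u_r\|_{(0,1)}^2=\sum_{k>r}(\sigma_k^{11})^2\|\psi_k^1\|_0^2$ already follow from the collapse-to-diagonal computation you give.
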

\begin{proof}
    The proof follows the same lines as the one of \cite[Theorem 4.1]{ANSVD}.
\end{proof}

\subsection{Exponential Sums}
One can reformulate the problem of low-rank approximations in $H^1$ as a
problem on sequence spaces. This point of view is particularly close to
numerical application and, in essence, has already been applied in previous
works, as we will demonstrate below.
For ease of exposition we will consider Fourier bases. But in principle any
multiscale Riesz basis could be used, e.g., wavelets.

Let $u\in H^1([-\pi,\pi]^2)$ be a $2\pi$-periodic function.
Then, $u$ can be expanded in the Fourier basis as
$
    u(x,y)=\frac{1}{2\pi}\sum_{k,m\in\Z}c_{km}e^{ikx}e^{imy},
$
where we also know that
$
    \sum_{k,m\in\Z}|c_{km}|^2(1+k^2+m^2)<\infty.
$

Since the Fourier basis is orthonormal in $L^2$, performing an SVD of the
sequence $\{c_{km}\}_{k,m\in\Z}$, we implicitly
obtain an $L^2$-SVD of $u$.
Since the Fourier basis is orthogonal in $H^1$ as well, we can simply rescale
and perform an SVD on the resulting sequence. However, this time with
error control in $H^1$.

More precisely,
\begin{align*}
    u(x,y)=\frac{1}{2\pi}\sum_{k,m\in\Z}c_{km}e^{ikx}e^{imy}
    =\frac{1}{2\pi}\sum_{k,m\in\Z}c_{km}
    \frac{\sqrt{1+k^2+m^2}}{\sqrt{1+k^2+m^2}}e^{ikx}e^{imy}.
\end{align*}
Performing the $\ell_2$-SVD of $\{c_{km}\sqrt{1+k^2+m^2}\}_{k,m\in\Z}$,\\
$
    c_{km}\sqrt{1+k^2+m^2}=\sum_{l=1}^\infty\sigma_lv_k^lw_m^l,
$
we obtain
\begin{align*}
    u(x,y)&=\frac{1}{2\pi}\sum_{l=1}^\infty\sigma_l
    \sum_{k,m\in\Z}
    \frac{1}{\sqrt{1+k^2+m^2}}v_k^l e^{ikx} w_m^l e^{imy}.
\end{align*}
The remaining issue is that the functions
$
    \sum_{k,m\in\Z}\frac{1}{\sqrt{1+k^2+m^2}}v_k^l e^{ikx} w_m^l e^{imy},
$
are not separable due to the scaling term $\frac{1}{\sqrt{1+k^2+m^2}}$.
On the other hand, the latter can be approximated to any desired accuracy by
exponential sums (see also \cite[Chapter 9.7.2]{HB}), which in turn are
separable. We approximate in the form
\begin{align}\label{eq:expsum}
    \frac{1}{\sqrt{1+k^2+m^2}}\approx\sum_{\nu\in\Z}E^\delta(k,\nu)E^\delta
    (m,\nu),
\end{align}
where $\delta>0$ controls the accuracy of the approximation.
Finally, we get the separable representation
\begin{align*}
    u(x, y)\approx
    \frac{1}{2\pi}\sum_{l=1}^\infty\sigma_l
    \sum_{\nu\in\Z}
    \left(
    \sum_{k\in\Z}E^\delta(k,\nu)
    v_k^l e^{ikx}\right)
    \left(\sum_{m\in\Z}
    E^\delta(m,\nu)w_m^le^{imy}\right),
\end{align*}
where the approximation can be performed to any accuracy
$\delta>0$. A finite
representation involves truncating the Fourier basis representation w.r.t.\
$k$ and $m$, truncating the exponential sum approximation w.r.t.\ $\nu$,
and truncating to a low-rank representation w.r.t.\ $l$. If we denote the
number of Fourier basis terms in each dimension by $n$, the number of
exponential sum terms by $p$ and the rank bound by $r$, then the overall
complexity for such a representation is $\mc O(rn2p)$, with a rank of
the final representation bounded by $rn$.

In principle, the same type of SVD was applied in \cite{BDH1}. There the authors
constructed an adaptive wavelet solver based on inexact Richardson iterations
for elliptic equations. They introduced a separable exponential sum
preconditioner, which approximates the scaling coefficients similar to
\eqref{eq:expsum}.
The properly scaled coefficients of the
numerical solution were then truncated via HOSVD. This is implicitly equivalent
to the procedure above.

A similar approach
was performed in \cite{BDL2} and \cite{AU}. In \cite{BDL2} the authors
controlled the error only in $L^2$ but generally observed convergence in
$H^1$ as well. This is consistent with our analysis for the $L^2$-SVD in part I.

\subsection{Sobolev Functions as Operators}
Until now we considered low-rank approximations for $u\in H^1(\Omega)$ by
using the $L^2$-SVD, $H^{(1,0)}$-SVD, $H^{(0,1)}$-SVD
and $H^1_{\mix}$-SVD. In all cases we
required additional regularity assumptions and the error estimates involved
singular values and scaling factors. One could ask if there is a natural
interpretation of $u\in H^1(\Omega)$ that fully exploits the intersection space
structure without any additional assumption.

For simplicity we consider the case $d=2$ and the space
$
    H^1(\Omega)\cong
    H^{(1,0)}\cap H^{(0,1)},
$
where on the right hand side we use the intersection norm
$
    \|\cdot\|_{\cap}^2:=\|\cdot\|^2_{(1,0)}
    +\|\cdot\|^2_{(0,1)}.
$
The structure of the norm suggests it is more appropriate to consider a direct
sum space. Thus, we define
$
    H_{2D}:=H^{(1,0)}\times H^{(0,1)},
$
with the corresponding natural norm
$
    \|\cdot\|_{2D}^2:=\|\cdot\|^2_{(1,0)}
    +\|\cdot\|^2_{(0,1)}.
$
We can continuously embed $H^1(\Omega)$ into this space via the linear
isometry
$
    H^1(\Omega)\hookrightarrow H_{2D},$ $
    u\mapsto (u, u),$ $\|u\|_1\sim \|(u, u)\|_{2D}.
$
The space $H^1(\Omega)$ represents the ``diagonal'' of $H_{2D}$. To see how $u$
can represent an operator, we further embed $H_{2D}$ into a space of Hilbert
Schmidt operators
\begin{align*}
    \HS\left(L^2(\Omega_2)\times H^1(\Omega_2),
    H^1(\Omega_1)\times L^2(\Omega_1)\right),
\end{align*}
by identifying $(u_1,u_2)\in H_{2D}$ with a map
$
   (u_1, u_2)[(w, v)]:=(u_1[w], u_2[v]),$ $
    \|(u_1, u_2)\|_{2D}=\|(u_1, u_2)\|_{\HS}.
$
To see the norm identity, consider again the $H^{(1,0)}$- and $H^{(0,1)}$-SVDs,
$
    u_1=\sum_{k=1}^\infty \sigma_k^{10}\psi_k^1\otimes\phi_k^0,$ $
    u_2=\sum_{k=1}^\infty \sigma_k^{01}\psi_k^0\otimes\phi_k^1.
$
Since $\{\phi^0_k\}_{k\in\N}$ and
$\{\phi_k^1\}_{k\in\N}$ are complete orthonormal systems for $L^2(\Omega_2)$
and\\
$H^1(\Omega_2)$, respectively, $\{(\phi_k^0, 0), (0, \phi_k^1)\}_{k\in\N}$
is a complete orthonormal system for\\
$L^2(\Omega_2)\times H^1(\Omega_2)$.
Analogously for $\{(\psi_k^1, 0), (0, \psi_k^0)\}_{k\in\N}$.
Applying $(u_1, u_2)$ to this orthonormal system we get
\begin{align*}
    (u_1, u_2)[(\phi_i^0, 0)]&=\left(\sum_{k=1}^\infty\sigma_k^{10}
    \dpair{\phi_i^0}{\phi_k^0}_0\psi_k^1, 0\right)=\sigma_i^{10}(\psi_i^1, 0),\\
    (u_1, u_2)[(0, \phi_i^1)]&=\left(0, \sum_{k=1}^\infty\sigma_k^{01}
    \dpair{\phi_i^1}{\phi_k^1}_1\psi_k^0\right)=\sigma_i^{01}(0, \psi_i^0).
\end{align*}
Let $\{\sigma_k^\cup\}_{k\in\N}$ represent the sorted union of the singular
values $\{\sigma_k^{10}\}_{k\in\N}$ and $\{\sigma_k^{01}\}_{k\in\N}$. Then, by
the above, the SVD of $(u_1, u_2)$ is given by
$
    (u_1, u_2) = \sum_{k=1}^\infty\sigma_k^\cup\psi_k\otimes\phi_k,
$
where
\begin{align*}
    \psi_k\otimes\phi_k=
    \begin{cases}
        (\psi_l^1, 0)\otimes (\phi_l^0, 0),&\;\text{if}\quad\sigma_k=\sigma_l^{10},\\
        (0, \psi_l^0)\otimes (0, \phi_l^1),&\;\text{if}\quad\sigma_k=\sigma_l^{01}.
    \end{cases}
\end{align*}

The extension to $d>2$ is straightforward. In summary, $H_{2D}$ seems like
a natural space for low-rank approximations for $u\in H^1(\Omega)$ and in
which $u$ can be interpreted as a Hilbert Schmidt operator without any
additional assumptions.

The issue remains, however, that low-rank approximations in $H_{2D}$ involve
a pair of approximations: one for the left and one for the right derivative.
If we require a single low-rank approximation, we would have to project
onto the ``diagonal'' of $H_{2D}$. This essentially involves the application
of the inverse of the Laplacian, which is not separable.

A general approach might be to reformulate a problem given in $H^1(\Omega)$ into
a problem in $H_{2D}$ and solve the latter in a low-rank format to obtain
a solution
being a tuple of low-rank approximantions. In a last step, one could apply an
approximate, efficient and problem independent projection onto the diagonal
to obtain a low-rank approximation $u_r\in H^1(\Omega)$. Though natural,
it is unclear to us if and how the interpretation as $u\in H_{2D}$ is
of practical use.

\section{Numerical Experiments}\label{sec:numexp}
In this section we verify our findings with a few toy examples. First,
we consider a function $u\in H^1([-\pi, \pi]^2)$, expand this function
in Fourier bases and truncate the expansion
\begin{align*}
    u(x, y)=\frac{1}{2\pi}\sum_{k,m\in\Z}c_{km}e^{-ikx}e^{-imy}
    \approx u_n(x, y)=
    \frac{1}{2\pi}\sum_{k=-n}^n\sum_{m=-n}^nc_{km}e^{-ikx}e^{-imy}.
\end{align*}
Then, we perform an SVD of $u_n$. This situation is prototypical for a
numerical method, where the current numerical approximation $u_n$ (with
possibly high ranks) is truncated
to a low-rank approximation $\tilde{u}_n$. We are particularly interested
in the behavior of the singular values and comparisons with $L^2$ and
$H^1$ errors.

We consider two functions. First,
$
    u(x, y) = (x^2+y^2)^{0.3},
$
which has a singularity in the derivatives at
$x=y=0$. Second,
$
    u(x, y) = |x+y|^{0.6},
$
which has a singularity along the anti-diagonal $x=-y$. The results
are displayed in \cref{fig:svd}.

The singular
values of the first function decay faster.
For the second function, since the singularity is not axis aligned,
we expect bad separability.
We plot both the $L^2$ and $H^1$ errors of the
$L^2$-SVD. We also plot the $H^1$-error of the projection
$(P_r\otimes Q_r) u$ from
\eqref{eq:comm}. In both cases $(P_r\otimes Q_r)u$ 
does not improve the error of the $L^2$-SVD.

Moreover, we also compare this with the best possible approximation in the
following sense. We take the eigenfunctions generated by all SVDs:
$L^2$-eigenfunctions of the $L^2$-SVD, $H^1$-eigenfunctions of the
$H^{(1,0)}$-, $H^{(0,1)}$-SVDs and
$L^2$-eigenfunctions of the $H^{(1,0)}$-, $H^{(0,1)}$-SVDs. Then, we perform
an $H^1$-orthogonal projection onto the space of tensor products spanned by
all possible combinations of these eigenfunctions. Of course, such a procedure
is not feasible in higher dimensions, it serves merely to illustrate our point.
We denote this by ``$H^1$ error optimal approximation''.

As can be seen in the plot for the second function, all possible
projections are the same as the best possible one. This is consistent with
expectation. In fact, all of the eigenspaces mentioned above are the same,
i.e., the eigenfunctions are linearly dependent. Recall the definition of
the three possible eigenspaces:
\begin{align*}
    U_{I}^j(u_n) &:= \clos{\|\cdot\|_0}
    {\linspan{\varphi(u_n):\varphi=\bigotimes_{k=1}^2\varphi_k,\;
    \varphi_j=\id_j,\;\varphi_k\in\left(L_2(\Omega_k))^*,\;k\neq j\right)}},\\
    U_{II}^j(u_n) &:= \clos{\|\cdot\|_1}
    {\linspan{\varphi(u_n):\varphi=\bigotimes_{k=1}^2\varphi_k,\;
    \varphi_j=\id_j,\;\varphi_k\in\left(L_2(\Omega_k))^*,\;k\neq j\right)}},\\
    U_{III}^j(u_n) &:= \clos{\|\cdot\|_0}
    {\linspan{\varphi(u_n):\varphi=\bigotimes_{k=1}^2\varphi_k,\;
    \varphi_j=\id_j,\;\varphi_k\in\left(H^1(\Omega_k))^*,\;k\neq j\right)}}.
\end{align*}
Since $u_n\in H^1(\Omega_1)\otimes_a H^1(\Omega_2)$,
by \cite[Lemma 3.1]{ANSVD} (see also \cite[Remark 6.32]{HB}),
$U_I^j(u_n)=U_{II}^j(u_n)=U_{III}^j(u_n)$.
From a theoretical perspective,
the truly difficult cases are when $u\in H^1(\Omega)$ but is not in
$H^1(\Omega_1)\otimes_a H^1(\Omega_2)$.
Only in such cases
the minimal subspaces depend on the topology of the ambient space.
In particular, this means that if
$u\in H^1(\Omega)$ is a numerical approximation, most of the assumptions
in the previous section hold\footnote{With sufficient regularity of the
basis functions, all assumptions hold.}.

We use an error estimator for the $H^1$-error
\begin{align}\label{eq:ee}
    e(r)=\left(\sum_{k=r+1}^n(\sigma^{10}_k)^2+(\sigma_k^{01})^2
    \right)^{1/2},
\end{align}
where $\{\sigma_k^{10}\}_{k\in\N}$ and $\{\sigma_k^{01}\}_{k\in\N}$ are the
singular values from Proposition \cref{prop:relationsingval}. The projections
$P_r$, $Q_r$ are from Section \cref{sec:h1l2proj}.
As can be seen in both plots, this error estimator lies perfectly on the
$H^1$-error. This is consistent with \cite[Theorem 4.1]{ANSVD} and Theorem
\cref{thm:singvalsbounds}.

\begin{figure}[ht!]
    \begin{center}
        \includegraphics[width=0.65\linewidth]{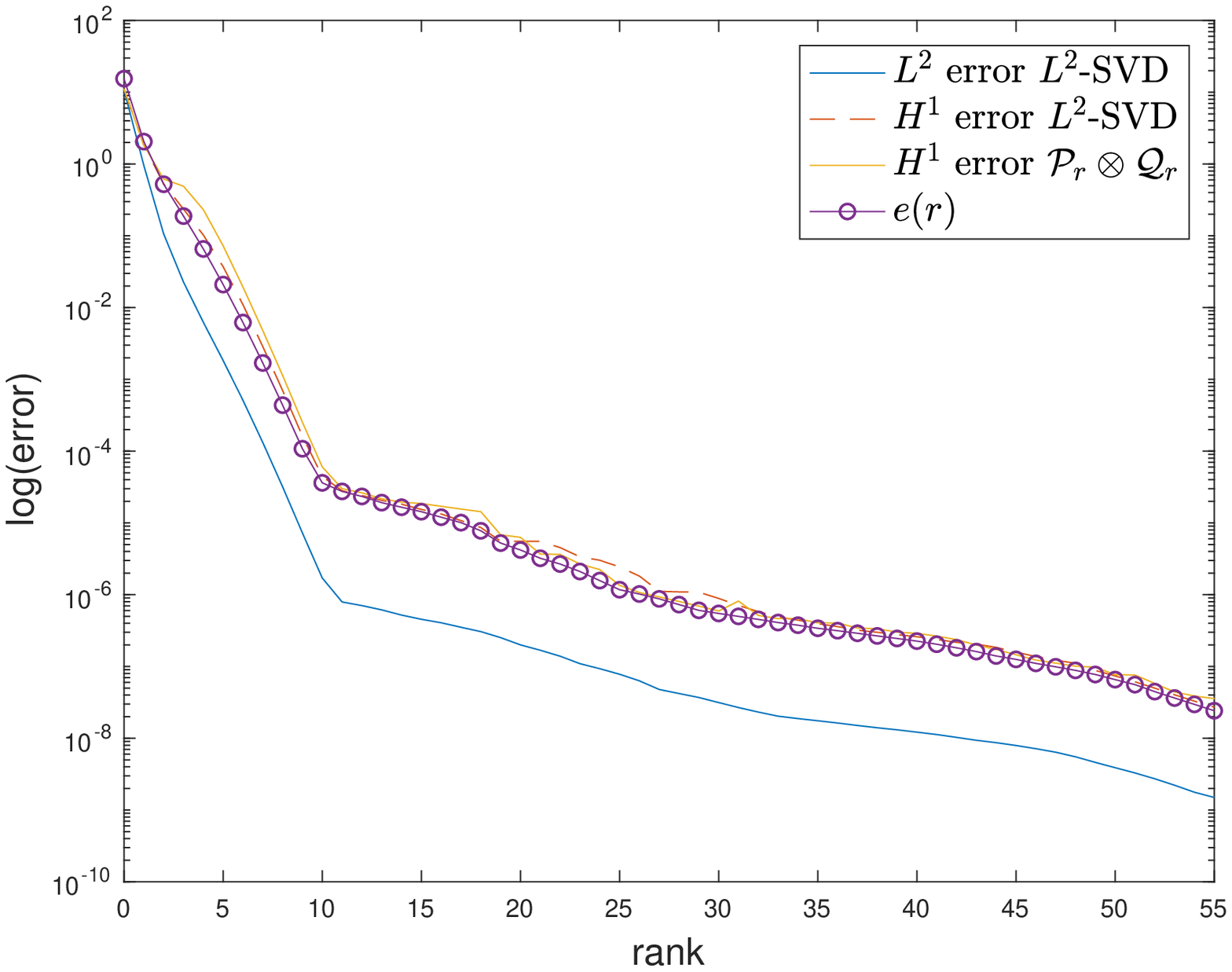}
        \includegraphics[width=0.65\linewidth]{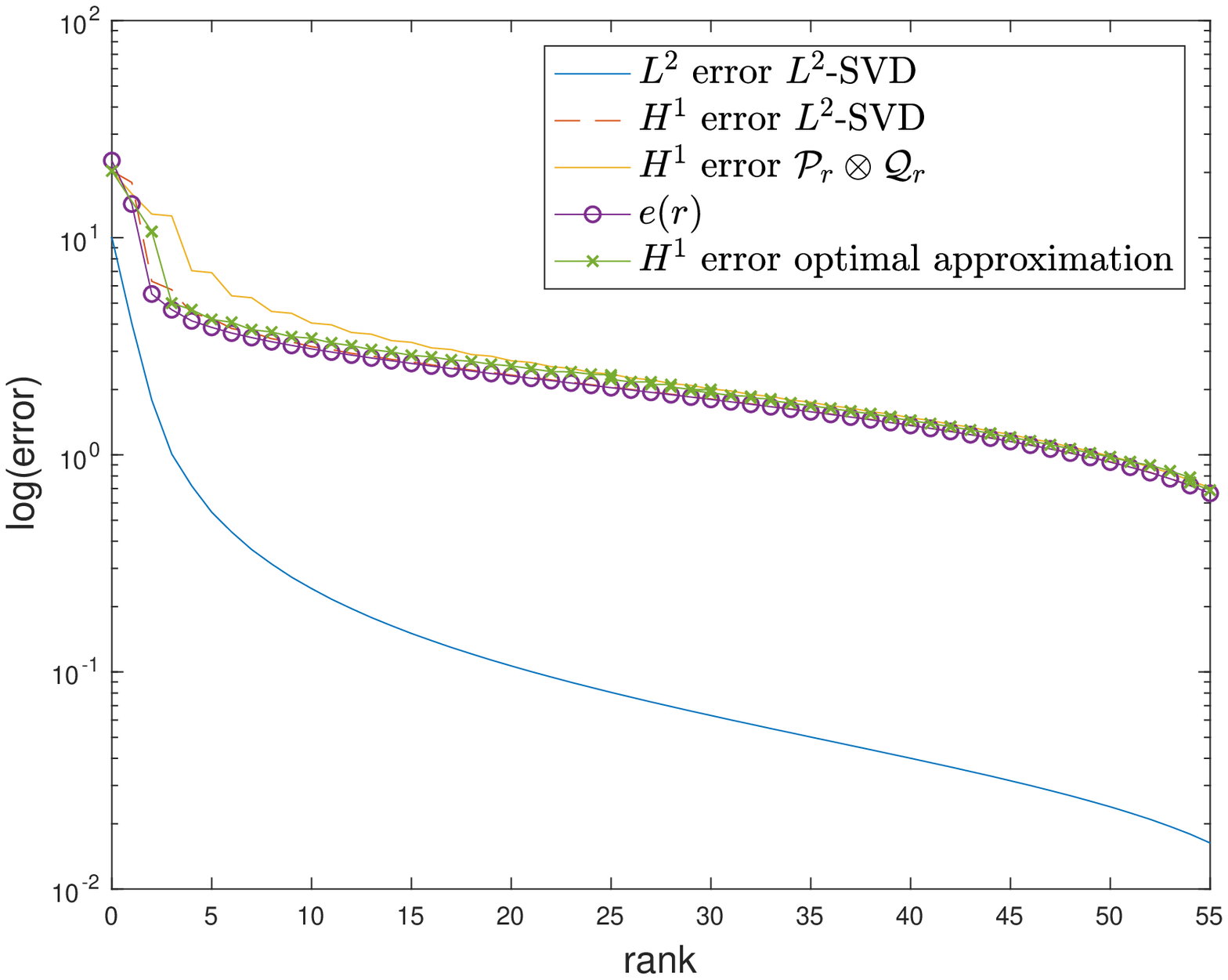}
        \captionof{figure}{Low rank approximations for truncated Fourier
                           series of $u(x, y)=(x^2+y^2)^{0.3}$ (top),
                           $u(x, y)=|x+y|^{0.6}$ (bottom).}
    \label{fig:svd}
    \end{center}
\end{figure}

These findings suggest that we can compute a low-rank approximation for
$u_n$ by performing an $L^2$-SVD and truncating based on the error estimator
in \eqref{eq:ee} to control the error in $H^1$. In the following we do just that.
We consider the weak formulation of the Poisson equation
$
    -\Delta u= f.
$
We compute a Galerkin approximation $u_n\approx u$, and truncate this
approximation to $\tilde{u}_n$ such that
\begin{align*}
    \|u-u_n\|_1\leq \|u-\tilde{u}_n\|_1\leq 2 \|u-u_n\|_1.
\end{align*}
We increase the discretization size $n$, i.e., the number $n^2$ of basis functions.
The results are displayed in \cref{fig:pde}. The plotted errors are
approximations to the exact errors $\|u-u_n\|_1$ and
$\|u-\tilde{u}_n\|_1$.
In both cases the error bounds
are fulfilled and the rank of $\tilde{u}_n$ remains below $5$.

\begin{figure}[ht!]
    \begin{center}
        \includegraphics[width=0.65\linewidth]{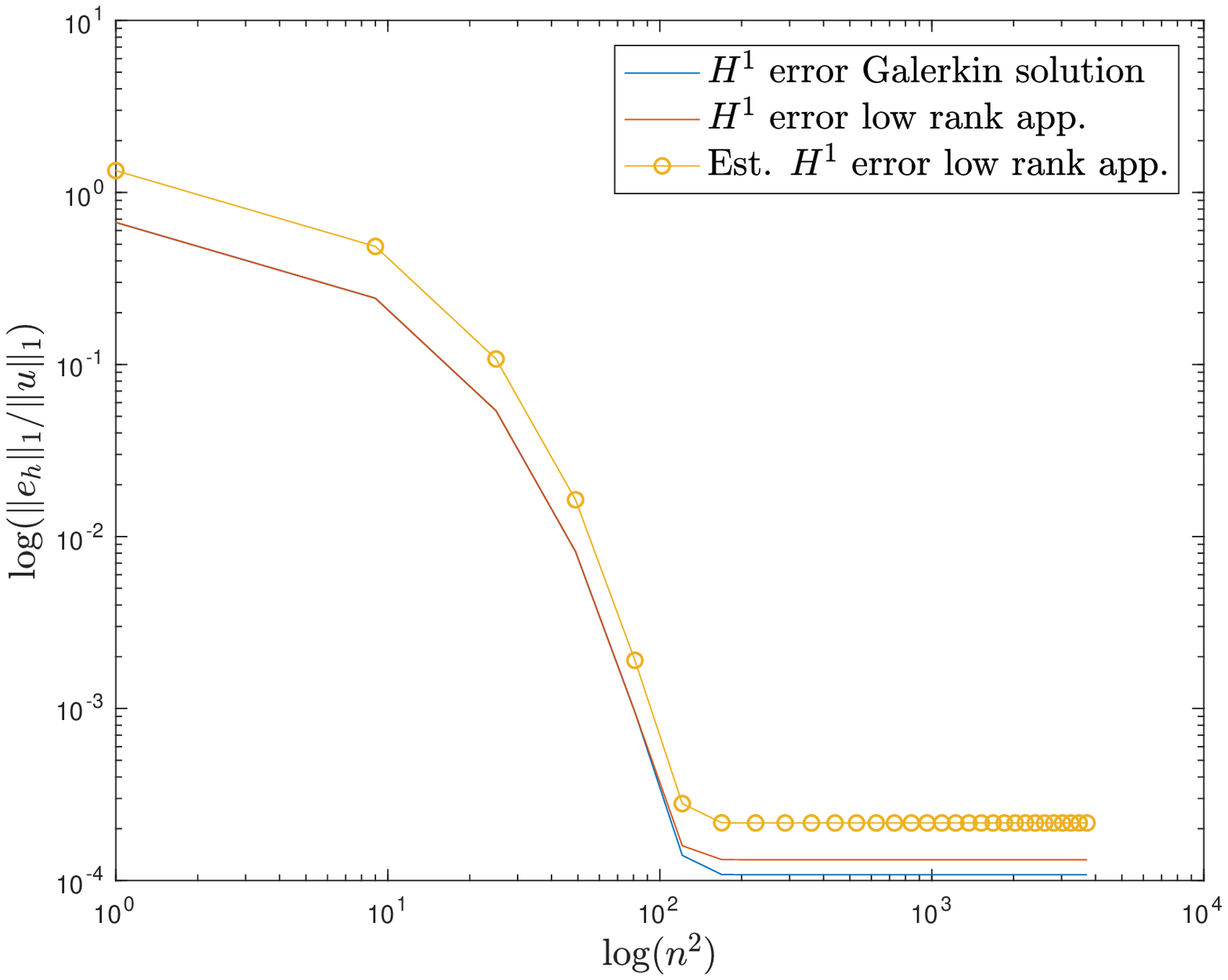}
        \includegraphics[width=0.65\linewidth]{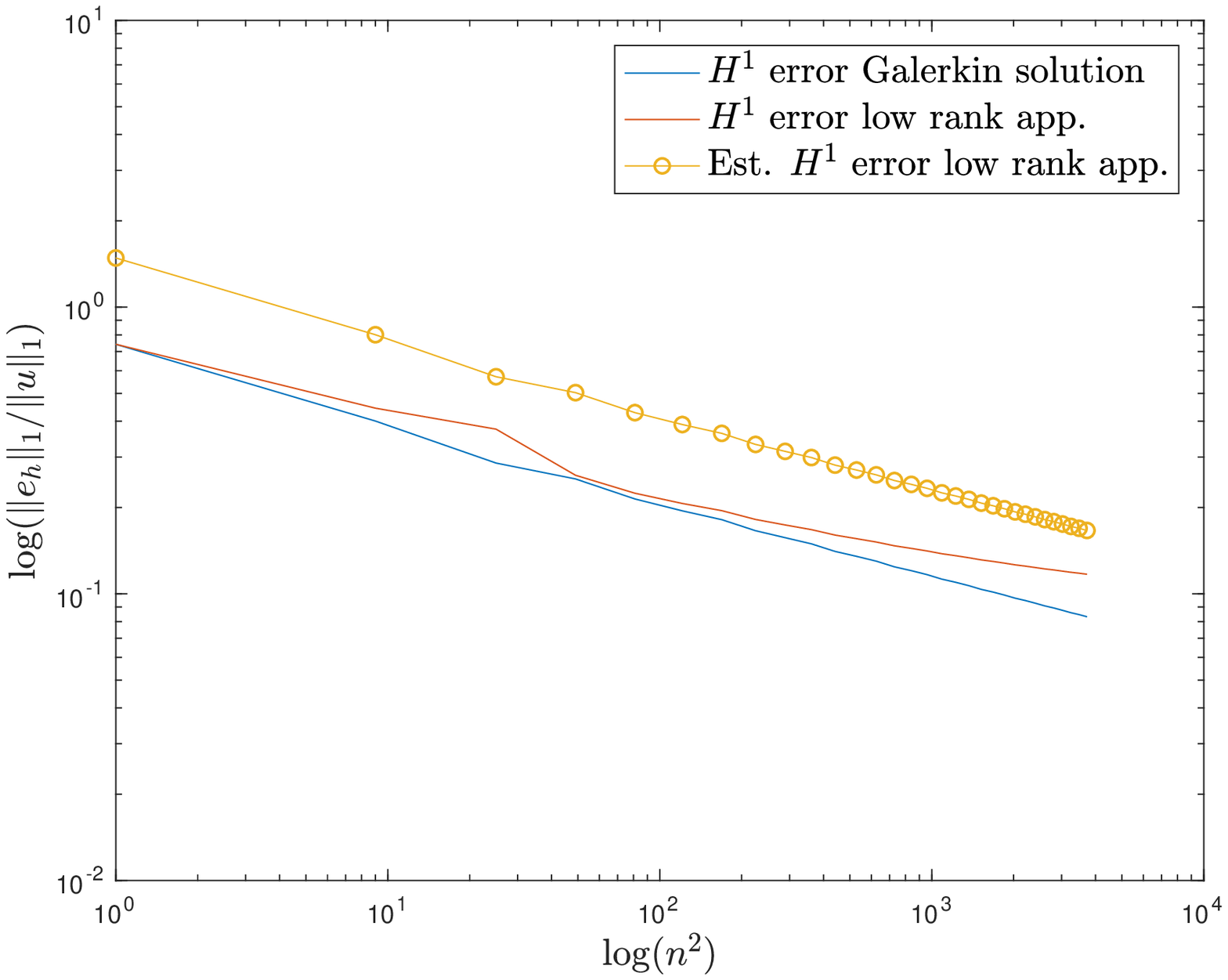}
        \captionof{figure}{Galerkin solutions for reference functions
                           $u(x, y)=\exp(\cos(x)\cos(y))$ (top)
                           and $u(x, y)=|1-(x^2+y^2)|^{0.95}$ (bottom).}
    \label{fig:pde}
    \end{center}
\end{figure}

\section{Conclusion}\label{sec:concl}
We proposed and analyzed several variants of
low-rank approximations of functions in Sobolev spaces.
In part I, we show that
sets of functions with bounded Tucker (multi-linear) rank
in Sobolev spaces are weakly closed. Sobolev functions can
be shown to be in the tensor product of their minimal subspaces under certain
conditions, such as additional regularity. However, we do not believe that
this holds in general.
The $L^2$-SVD preserves regularity of the decomposed functions and, under
certain conditions, we can quantify the $H^1$ error in terms of the rescaled
singular values.

In part II,
we show that the singular values of different SVDs are closely
related. Lower and upper bounds are obtained by simple scalings.
We also analyze $H^1$ minimal subspaces. The SVD in $H^{(1,0)}$ does not
preserve regularity and $H^1$ bounds require additional smoothness.
The resulting bounds are worse than that of the $L^2$-SVD. Similar
bounds apply to spaces of lower order mixed smoothness for $d>2$.
This indicates the $L^2$-SVD performs better for low-rank
approximations than variants of SVDs involving Sobolev spaces.

Numerical experiments are consistent with the analytical findings.
Differences
between minimal subspaces w.r.t.\ to different norms arise only when
considering functions in Sobolev spaces that are not in the
algebraic tensor spaces.
For
constructing low-rank approximations of numerical solutions, the
different types of minimal subspaces do not add information. However,
the singular values of $H^{(1,0)}$- and $H^{(0,1)}$-SVDs
are better suited to estimate the
$H^1$ error and, for numerical purposes, it seems the best recipe are
low-rank approximations built from $L^2$-SVDs but with $H^{(1,0)}$ and
$H^{(0,1)}$ singular
values used for $H^1$-error control.

Finally, we briefly mentioned alternatives. Exponential sums are a well known
technique already utilized in previous works. On the other hand, if one
pursues the viewpoint of Sobolev spaces being intersection spaces, a natural
approach would be to consider direct sum spaces. We briefly introduced this
viewpoint.

There are a few immediate open questions that arise in conclusion of this work.
It would be interesting to consider how the above analysis extends
to hierarchical tensor formats (see \cite[Chapter 11]{HB}).
Numerical experiments for high-dimensional
problems with a fine or adaptive discretization
should shed more light on the performance of SVD in Sobolev spaces.

\section*{Acknowledgments}
We would like to thank Jochen Gl\"{u}ck\footnote{jochen.glueck@alumni.uni-ulm.de}
for Example \cref{ex:discproj}.

\appendix
\section{Proof of \cref{prop:h1mixgen}}\label{sec:app}
\begin{proof}
    \textbf{I.} To begin, we consider the statement for $J=I=(1,\ldots, d)$.
    We have
    \begin{align*}
        &\|u-\mc P_{r}u\|_1=
        \left\|\left(\id-\prod_{k=1}^d P_r^k\bigotimes_{i\neq k}
        \id_i\right)u\right\|_1
        =\left\|\sum_{j=1}^d\prod_{k=1}^{j-1}
        P_r^k\bigotimes_{i\neq k}\id_i\left(u-\mc P_r^ju\right)\right\|_1\\
        &\leq\sum_{j=1}^d
        \left\|\prod_{k=1}^{j-1}
        P_r^k\bigotimes_{i\neq k}\id_i\left(u-\mc P_r^ju\right)\right\|_1
        \leq
        \sum_{j=1}^d
        \left\|\prod_{k=1}^{j-1}
        P_r^k\bigotimes_{i\neq k}\id_i\left(u-\mc P_r^ju\right)\right\|_{\mix, j}\\
        &=
        \sum_{j=1}^d
        \left\|\left(P_r^1\otimes\cdots\otimes P_r^{j-1}\otimes\id_j
        \otimes\cdots\otimes\id_d\right)\left(u-\mc P_r^ju\right)\right\|_{\mix, j}
    \end{align*}
    Thus, we need to bound the norm of the operator
    \begin{align*}
        P_r^1\otimes\cdots\otimes P_r^{j-1}\otimes\id_j
        \otimes\cdots\otimes\id_d:
        \clos{\|\cdot\|_{\mix, j}}{\mb V_j}\rightarrow
        \clos{\|\cdot\|_{\mix, j}}{\mb V_j}.
    \end{align*}
    Since $\|\cdot\|_{\mix, j}$ is a uniform crossnorm on 
    $\clos{\|\cdot\|_{\mix, j}}{\mb V_j}$, we only need to bound
    \begin{align*}
        P_r^1\otimes\cdots\otimes P_r^{j-1}\otimes
        \id_{j+1}\otimes\cdots\otimes\id_d:
        H^1(\bigtimes_{k\neq j}\Omega_k)\rightarrow
        H^1(\bigtimes_{k\neq j}\Omega_k).
    \end{align*}

    \textbf{II.} To that end, we first check if $P_r^j$ is bounded in
    $L^2$. In order to describe the space $M^j_r$, we consider again the SVD of
    the operator $u:H^1(\Omega_j)\rightarrow H^1(\bigtimes_{k\neq j}\Omega_k)$.
    To shorten notation, we use
    $x_j^\wedge:=(x_1,\ldots,x_{j-1}, x_{j+1},\ldots,x_d)$ and
    $x_j^\vee:=(\ldots,x_j,\ldots)$. We have
    \begin{align*}
        u[v]=\int_{\Omega_j}u(x_j^\vee)
        v(x_j)dx_j+\int_{\Omega_j}\frac{\partial}{\partial x_j}
        u(x_j^\vee)
        \frac{d}{dx_j}v(x_j)dx_j,
    \end{align*}
    and
    \begin{align*}
        &u^*[w]=\int_{\bigtimes_{k\neq j}\Omega_k}u(\ldots, x_{j-1},\cdot,
        x_{j+1},
        \ldots)
        w(x_j^\wedge)dx_j^\wedge\\
        &+
        \sum_{i\neq j}
        \int_{\bigtimes_{k\neq j}\Omega_k}\frac{\partial}{\partial x_i}
        u(\ldots, x_{j-1},\cdot,
        x_{j+1},
        \ldots)
        \frac{\partial}{\partial x_i}v(x_j^\wedge)dx_j^\wedge.
    \end{align*}
    The singular functions $\psi_k^j\in H^1(\Omega_j)$ satisfy
    \begin{align*}
        &u^*u[\psi_k]=\int_{\bigtimes_{k\neq j}\Omega_k}
        u(\ldots, x_{j-1},\cdot,x_{j+1},\ldots)
        \int_{\Omega_j}u(x)
        \psi_k^j(x_j)dx_jdx_j^\wedge\\
        &+
        \int_{\bigtimes_{k\neq j}\Omega_k}
        u(\ldots, x_{j-1},\cdot,x_{j+1},\ldots)
        \int_{\Omega_j}\frac{\partial}{\partial x_j}u(x)
        \frac{d}{dx_j}\psi_k^j(x_j)dx_jdx_j^\wedge\\
        &+
        \sum_{i\neq j}
        \int_{\bigtimes_{k\neq j}\Omega_k}\frac{\partial}{\partial x_i}
        u(\ldots, x_{j-1},\cdot,
        x_{j+1},
        \ldots)
        \int_{\Omega_j}
        \frac{\partial}{\partial x_i}
        u(x)\psi_k^j(x_j)dx_j
        dx_j^\wedge\\
        &+
        \sum_{i\neq j}
        \int_{\bigtimes_{k\neq j}\Omega_k}\frac{\partial}{\partial x_i}
        u(\ldots, x_{j-1},\cdot,
        x_{j+1},
        \ldots)
        \int_{\Omega_j}
        \frac{\partial^2}{\partial x_i\partial x_j}
        u(x_j^\vee)
        \frac{d}{dx_j}\psi_k^j(x_j)dx_j
        dx_j^\wedge\\
        &=\lambda_k^j\psi_k^j,
    \end{align*}
    where $\lambda_k^j=(\sigma_k^j)^2$. Since $u\in H^3(\Omega)$,
    differentiating twice we get
    \begin{align*}
        &\lambda^j_k\frac{d^2}{dx_j^2}\psi_k=\int_{\bigtimes_{k\neq j}\Omega_k}
        \frac{\partial^2}{\partial x_j^2}u(\ldots, x_{j-1},\cdot,x_{j+1},\ldots)
        \int_{\Omega_j}u(x)
        \psi_k^j(x_j)dx_jdx_j^\wedge\\
        &+
        \int_{\bigtimes_{k\neq j}\Omega_k}
        \frac{\partial^2}{\partial x_j^2}u(\ldots, x_{j-1},\cdot,x_{j+1},\ldots)
        \int_{\Omega_j}\frac{\partial}{\partial x_j}u(x)
        \frac{d}{dx_j}\psi_k^j(x_j)dx_jdx_j^\wedge\\
        &+
        \sum_{i\neq j}
        \int_{\bigtimes_{k\neq j}\Omega_k}\frac{\partial^3}{\partial x_j^2
        \partial x_i}
        u(\ldots, x_{j-1},\cdot,
        x_{j+1},
        \ldots)
        \int_{\Omega_j}
        \frac{\partial}{\partial x_i}
        u(x)\psi_k^j(x_j)dx_j
        dx_j^\wedge\\
        &+
        \sum_{i\neq j}
        \int_{\bigtimes_{k\neq j}\Omega_k}\frac{\partial^3}{\partial x_j^2
        \partial x_i}
        u(\ldots, x_{j-1},\cdot,
        x_{j+1},
        \ldots)
        \int_{\Omega_j}
        \frac{\partial^2}{\partial x_i\partial x_j}
        u(x_j^\vee)
        \frac{d}{dx_j}\psi_k^j(x_j)dx_j
        dx_j^\wedge.
    \end{align*}
    Thus, we can apply \cref{lemma:prbound} and conclude
    \begin{align*}
        \|P_r^j v\|_1\leq\sqrt{2r}D_j(r)\|v\|_0,\quad v\in H^1(\Omega_j).
    \end{align*}

    \textbf{III.} With the above we estimate further
    \begin{align*}
        &\|\left(P_r^1\otimes\cdots\otimes P_r^{j-1}
        \otimes\id_{j+1}\otimes\cdots\otimes\id_d\right)v\|_1^2\\
        &\leq
        \sum_{l\neq j}
        \|\left(P_r^1\otimes\cdots\otimes P_r^{j-1}
        \otimes\id_{j+1}\otimes\cdots\otimes\id_d\right)v\|_{e_l}^2\\
        &\leq
        \sum_{l\neq j}\prod_{\substack{k=1,\ldots,j-1,\\k\neq l}}
        \|P_r^k\|^2_{H^1\leftarrow L_2}
        \|v\|_{\bs l}^2\\
        &\leq 4r^{d-2}\max_{l\neq j}
        \prod_{\substack{k=1,\ldots,j-1,\\k\neq l}}D_k(r)^2\|v\|_1^2.
    \end{align*}

    \textbf{IV.} The last term in the error is simply
$
        \left\|u-\mc P_r^ju\right\|^2_{\mix, j}
        =\sum_{k=r+1}^\infty(\sigma_k^j)^2.
$
    Since the ordering $J\in S_d(I)$ can be chosen arbitrarily, the statement
    follows.
\end{proof}

\bibliographystyle{acm}
\bibliography{literature}

\end{document}